\newtheorem{theorem}{Theorem}[section]
\newtheorem{proposition}[theorem]{Proposition}
\newtheorem{definition}[theorem]{Definition}
\newtheorem{corollary}[theorem]{Corollary}
\newtheorem*{theorem*}{Theorem}
\newtheorem{remark}[theorem]{Remark}
\newtheorem{lemma}[theorem]{Lemma}
\newtheorem*{proposition*}{Proposition}
\newcommand{\C}{\mathbb C}
\newcommand{\Hom}{\mathrm{Hom}}
\renewcommand{\P}{\mathbb P}
\newcommand{\NN}{\widetilde{\mathcal N}}
\newcommand{\F}{\mathscr F}
\newcommand{\g}{\mathfrak{g}}
\renewcommand{\b}{\mathfrak{b}}
\renewcommand{\O}{\mathcal O}
\newcommand{\n}{\mathfrak{n}}
\renewcommand{\u}{\mathfrak{u}}
\newcommand{\Ext}{\text{Ext}}
\newcommand{\Spec}{\text{Spec}}
\newcommand{\Fgt}{\text{Fgt}}
\title{Twisted equivariant HKR theorem for torus action and the small quantum group}
\author{Nicolas Hemelsoet}
\begin{document}

\maketitle

\begin{abstract}
  
We show that when a torus $T$ acts on a smooth variety $X$, the twisted HKR isomorphism is equivariant. The main consequence is that the Bezrukavnikov-Lachowska isomorphism, relating the Hochschild cohomology of the principal block of the small quantum group to certain sheaf cohomology groups on the Springer resolution $\NN$, can be upgraded to a ring isomorphism by a twist.

\end{abstract}

\begingroup
\def\addvspace#1{}
\tableofcontents
\endgroup

\section{Introduction}


The starting point of this article is the following theorem :

\begin{theorem}[\cite{BL}]\label{thm0}
There exists an algebra isomorphism \[ HH^{\bullet}(\u_0(\g)) \cong \bigoplus_{i+j+k= \bullet} H^i(\NN, \wedge^j T \NN)^k \]
\end{theorem}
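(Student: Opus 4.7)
The natural plan is to reduce the algebraic problem on the left to a geometric one on the Springer resolution via a derived equivalence, and then apply a Hochschild--Kostant--Rosenberg identification. I would not attempt a direct computation of $HH^\bullet$ from a bar or Koszul resolution of $\u_0(\g)$; rather I would transport the computation to the geometric side, where the cohomology of polyvector fields on $\NN$ is accessible.

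First, I would invoke an Arkhipov--Bezrukavnikov--Ginzburg style derived equivalence
\[ \Phi \colon D^b(\u_0(\g)\text{-mod}_0) \xrightarrow{\;\sim\;} D^b(\text{Coh}^{\mathbb{G}_m}(\NN)), \]
where the $\mathbb{G}_m$-action on $\NN = T^*(G/B)$ is the scaling along the cotangent fibers. The third index $k$ in the theorem corresponds to the weight of this $\mathbb{G}_m$-action, which on the quantum side matches the internal grading by powers of $q$ coming from the $\Z$-grading of $\u_0(\g)$. Since Hochschild cohomology is a derived (dg-Morita) invariant, $\Phi$ induces an isomorphism
\[ HH^\bullet(\u_0(\g)_0) \;\cong\; HH^\bullet(\text{Coh}^{\mathbb{G}_m}(\NN)) \;\cong\; \Ext^\bullet_{\NN \times \NN}(\Delta_*\O, \Delta_*\O)^{\mathbb{G}_m}. \]

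Next, since $\NN$ is smooth, the classical HKR theorem gives an identification
\[ \Ext^\bullet_{\NN \times \NN}(\Delta_*\O, \Delta_*\O) \;\cong\; \bigoplus_{i+j=\bullet} H^i(\NN, \wedge^j T\NN), \]
and decomposing with respect to the $\mathbb{G}_m$-weight on the cotangent fibers produces the extra grading $k$, matching the stated formula. At this stage one has an isomorphism of (triply-)graded vector spaces.

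The main obstacles are, first, the derived equivalence itself: this is a deep input (whose construction one would cite), and the precise matching of the internal grading on $\u_0(\g)_0$ with the $\mathbb{G}_m$-weight on $\NN$ requires careful bookkeeping. The second, and the one I expect to be hardest, is compatibility with the algebra structure: HKR is \emph{a priori} only an isomorphism of graded modules, and making the multiplicative structure match on the nose requires a Todd-class twist (the Caldararu-type correction). Reconciling this twist with the $\mathbb{G}_m$- (and ultimately $T$-)action is precisely the equivariant HKR question that the remainder of the paper takes up.
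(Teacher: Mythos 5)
Your route is essentially the paper's: the deep input (a Bezrukavnikov--Lachowska/ABG-type statement transporting $HH^{\bullet}(\u_0(\g))$ to an equivariant $\Ext$-algebra on $\NN\times\NN$) is black-boxed exactly as the paper black-boxes Theorem \ref{thmBL}, and the remainder is equivariant HKR plus a weight decomposition, with multiplicativity deferred to the Todd twist. One step, however, does not work as written: the chain $HH^{\bullet}(\mathrm{Coh}^{\mathbb{G}_m}(\NN))\cong \Ext^{\bullet}_{\NN\times\NN}(\Delta_*\O,\Delta_*\O)^{\mathbb{G}_m}$ followed by ``decomposing with respect to the $\mathbb{G}_m$-weight'' is internally inconsistent --- once you pass to invariants there is no weight left to decompose, and the invariants alone produce only the $k=0$ summand of the target. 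The object you actually need is $\bigoplus_k \Ext^{\bullet}_{\mathrm{Coh}^{\mathbb{G}_m}(\NN\times\NN)}(\O_{\Delta},\O_{\Delta}\otimes z^k)$, i.e.\ the full weight decomposition of the nonequivariant $\Ext$-algebra (legitimate by semisimplicity of algebraic torus modules, Lemma \ref{semisimple} and Corollary \ref{cor3}); note also that $k$ contributes to the \emph{total} cohomological degree ($j+k=m$ in the paper's definition of $HH^*_{\C^*}(\NN)$), which is part of the content of the black-boxed equivalence rather than automatic bookkeeping. Finally, to land on the stated groups $H^i(\NN,\wedge^j T\NN)^k$ the paper inserts a step you omit: push forward along the affine morphism $pr:\NN\to G/B$, where the $\C^*$-action is trivial, so that the eigenspace decomposition of $pr_*\wedge^j T\NN$ (Lemma \ref{trivial}) realizes the grading by $k$. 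With these corrections your argument coincides with the paper's.
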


This theorem relates the Hochschild cohomology of the principal block of the small quantum group with an "equivariant" version of the Hochschild cohomology of the Springer resolution. More formally, let us detail the notation : $\g$ is a semisimple Lie algebra, $\u_0(\g)$ is the principal block of the small quantum group associated to $\g$, $\NN$ is the \emph{Springer resolution} $\NN := T^*(G/B) = G \times_B \n$ where $G$ is the connected simple algebraic group of adjoint type associated to $\g$, $B \subset G$ is a Borel subgroup and $\n = [\b, \b]$ where $\b = Lie(B)$. $HH^{\bullet}(\u_0(\g))$ is the Hochschild cohomology of $\u_0(\g)$. The subscript $k$ correspond to the grading acquired from the $\C^*$ action by dilations on the fiber on $\NN$. We refer to \cite{BL} and \cite{LQ} for a more detailled exposition. \\ 

The multiplication on the left-hand side is given by the Yoneda product, and on the right-hand side  by the exterior algebra structure. The purpose of this paper is to explain precisely how to obtain a multiplicative isomorphism, using a theorem by Kontsevitch. We will check the compatibility of the multiplicative version of the Hochschild-Kostant-Rosenberg theorem with a action of a torus. \\ 

Let $T$ be a complex torus, and $X$ be an irreducible smooth complex variety acted on by $T$. More generally, one can consider an arbitrary complex reductive group acting on a smooth variety $X$, if there exists an affine cover by invariant open subset. Let $HH^{\bullet}(X)$ be the Hochschild cohomology of $X$, defined as $Ext^{\bullet}_{Coh(X \times X)}(\O_{\Delta}, \O_{\Delta})$, where $\Delta \subset X \times X$ is the diagonal (for a variety $Z$ we denote $\O_Z$ the structure sheaf), and $HT^{\bullet}(X)$ denotes the bigraded vector space $\bigoplus_{i,j} H^i(X, \wedge^j TX)$. Our first result is : 
\begin{theorem}\label{thm1}
The twisted HKR morphism $HH^{\bullet}(X) \to HT^{\bullet}(X)$ is $T$-equivariant.
\end{theorem}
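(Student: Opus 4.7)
The plan is to prove $T$-equivariance of the twisted HKR morphism by decomposing it into two pieces—the naive HKR quasi-isomorphism and Kontsevich's twist by the square root of the Todd class—and showing each is $T$-equivariant separately. Both are canonical constructions attached to the $T$-equivariant pair $(X, T_X)$, so the bulk of the argument will consist in checking that everything can be realized inside the derived category of $T$-equivariant coherent sheaves, with no auxiliary choices breaking equivariance.

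I would first treat the naive HKR map. By Sumihiro's theorem the smooth variety $X$ admits a cover $\{U_i\}$ by $T$-invariant affine opens (this is precisely the hypothesis stated in the paper in the more general reductive case). On each $U_i$ the diagonal $\Delta \subset U_i \times U_i$ is a $T$-invariant regular embedding, and the Koszul complex gives a $T$-equivariant locally free resolution of $\O_\Delta$. The antisymmetrization map defining HKR at the chain level is built from purely intrinsic data of this regular embedding, hence is automatically $T$-equivariant. Gluing via a $T$-equivariant \v{C}ech complex yields a globally $T$-equivariant morphism in the derived category of $T$-equivariant sheaves on $X \times X$, which induces equivariance on $\Ext$-groups.

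I would then address the twist. Kontsevich's twisted HKR is obtained by contracting polyvector fields with $\sqrt{\td(T_X)}^{-1}$, viewed as an element of $\bigoplus_i H^i(X, \Omega^i_X)$. Since $T_X$ is canonically a $T$-equivariant vector bundle, its Chern classes, hence its Todd class and the square root thereof, have weight zero for the induced $T$-action on Hodge cohomology. Contraction with a weight-zero element is a $T$-equivariant endomorphism of $HT^\bullet(X)$, so composing with the $T$-equivariant naive HKR produces a $T$-equivariant twisted HKR.

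The main obstacle I anticipate is verifying that Kontsevich's twist—equivalently, the Calaque--Van den Bergh chain-level formula, which involves the Atiyah class of $T_X$—can be realized strictly $T$-equivariantly rather than only up to homotopy. Working throughout with $T$-equivariant resolutions and the $T$-equivariant Atiyah class should suffice, since the construction is canonical in the input bundle and no auxiliary choices need to break equivariance; but this is the step where one must be most careful, and where a clean derived-categorical formulation of the equivariant statement is essential.
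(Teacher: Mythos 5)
Your proposal is correct and follows the same skeleton as the paper: reduce to a $T$-invariant affine cover via Sumihiro's theorem, check equivariance of the HKR quasi-isomorphism locally and glue, then show the Todd class is $T$-invariant so that contraction with $\td(T_X)^{-1/2}$ is an equivariant endomorphism of $HT^{\bullet}(X)$. The two places where you diverge are minor but worth noting. First, for the local computation the paper works with the bar complex $\mathscr{C}_{\bullet}(R)$ rather than the Koszul resolution: the bar differential is visibly $T$-equivariant, and the HKR map $a \mapsto da$ is checked to be equivariant via the identification $\Omega_R \cong J/J^2$; your Koszul/antisymmetrization model is an equally canonical substitute, though "built from intrinsic data, hence equivariant" deserves the same one-line verification the paper gives for $a \mapsto da$. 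Second, for invariance of the Todd class the paper does not argue via Chern classes having weight zero; it defines the Todd class through the Atiyah class, observes that the first-order jet sequence $0 \to \Omega_X \otimes TX \to J^1(TX) \to TX \to 0$ is $T$-equivariant, and invokes a general lemma that the extension class of an equivariant short exact sequence lands in the $T$-invariants of $\Ext^1$ (which in turn rests on the identification $\Ext^q_{\mathrm{QCoh}^K(X)} \cong (\Ext^q_{\mathrm{QCoh}(X)})^K$). This route is what the paper's Section 2 machinery on equivariant sheaves is built to support, and it avoids having to justify separately that Chern classes of an equivariant bundle are invariant. Finally, the chain-level realization of the Calaque--Van den Bergh twist that you flag as the main obstacle is not actually needed: since the statement concerns the map on cohomology, invariance of the Todd class plus equivariance of $I_{HKR}$ already give equivariance of the composition.
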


This result relies on the following two propositions :

\begin{proposition}\label{prop0}
In the derived category of coherent sheaves on $X$ $D^b(\text{Coh}(X))$, the quasi-isomorphism $$\iota^*\O_{\Delta} \cong \bigoplus_{i \in X} \Omega^i_X[i]$$ is $T$-equivariant. 
\end{proposition}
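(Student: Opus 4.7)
The plan is to verify that each ingredient in the standard construction of the HKR quasi-isomorphism admits a natural $T$-equivariant refinement, so that the isomorphism in $D^b(\text{Coh}(X))$ lifts to the equivariant derived category $D^b_T(\text{Coh}(X))$.

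First I would set up the equivariant data. The diagonal embedding $\iota: X \hookrightarrow X \times X$ is $T$-equivariant when $X \times X$ carries the diagonal action, and its ideal sheaf $\mathcal{I}_\Delta$ is a $T$-equivariant subsheaf of $\O_{X \times X}$. The universal derivation gives a $T$-equivariant identification $\mathcal{I}_\Delta/\mathcal{I}_\Delta^2 \cong \Omega^1_X$, so each summand $\Omega^i_X \cong \wedge^i(\mathcal{I}_\Delta/\mathcal{I}_\Delta^2)$ on the right-hand side inherits its natural $T$-equivariant structure. Both objects in the claimed quasi-isomorphism are therefore already well-defined in $D^b_T(\text{Coh}(X))$, and the content of the proposition is that one can find a quasi-isomorphism between them lifting to this equivariant category.

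For the local construction I would invoke Sumihiro's theorem to obtain a $T$-equivariant affine open cover $\{U_\alpha\}$ of $X$. On each $U_\alpha$ the coordinate ring decomposes into $T$-weight spaces, and the diagonal ideal in $\O(U_\alpha) \otimes \O(U_\alpha)$ is generated by a regular sequence of $T$-weight vectors of the form $x_i \otimes 1 - 1 \otimes x_i$ for $T$-semi-invariant generators $x_i$ of $\O(U_\alpha)$. The corresponding Koszul complex is then a $T$-equivariant locally free resolution of $\O_{\Delta_\alpha}$, and applying $\iota^*$ yields a complex with vanishing differential whose $i$-th term is $T$-equivariantly identified with $\Omega^i_{U_\alpha}[i]$. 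This proves the claim locally on each chart.

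The main obstacle is globalizing this to a single $T$-equivariant quasi-isomorphism in $D^b(\text{Coh}(X))$, since the local Koszul resolutions do not glue on the nose and the HKR splitting is not canonical in the derived category. I would handle this via the Kapranov--Markarian description of the HKR map: the splitting is produced from iterated exterior powers of the Atiyah class of $\O_\Delta$, followed by antisymmetrization. Because this Atiyah class is defined by the functorial extension $0 \to \mathcal{I}_\Delta/\mathcal{I}_\Delta^2 \to \O_{X \times X}/\mathcal{I}_\Delta^2 \to \O_\Delta \to 0$ in which every term is visibly $T$-equivariant, the class lives in the corresponding equivariant $\Ext^1$ group, and all of the operations used to build the HKR map (tensor products, exterior powers, antisymmetrization) preserve equivariance. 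Comparing with the local Koszul computation on each $U_\alpha$ confirms that the resulting global map is a quasi-isomorphism, yielding the desired $T$-equivariant splitting.
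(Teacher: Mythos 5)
Your proposal is correct in outline but takes a genuinely different route from the paper. The paper proves this statement via Lemmas \ref{lemme1} and \ref{lemme2}: it resolves $\O_{\Delta}$ by the bar complex $\mathscr{C}_{\bullet}(R)$, which is canonical and functorial, so the local quasi-isomorphisms $1 \otimes a_1 \otimes \dots \otimes a_i \otimes 1 \mapsto da_1 \wedge \dots \wedge da_i$ glue for free over a Sumihiro cover, and equivariance is read off from this explicit formula together with the identification $\Omega_R \cong J/J^2$, $da \mapsto a \otimes 1 - 1 \otimes a$. You instead use local Koszul resolutions and globalize through the universal Atiyah class of $\O_{\Delta}$ (in the style of Markarian, Ramadoss, Calaque--Van den Bergh). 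Your route makes equivariance automatic by naturality, since every term of the defining extension is visibly equivariant, but it pays for this by invoking the nontrivial theorem that exponentiating the universal Atiyah class recovers the HKR quasi-isomorphism; the bar-complex route sidesteps that machinery entirely because its resolution is already canonical and its comparison map is given by an explicit, manifestly equivariant formula.

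One step of your local argument is overstated: on a general $T$-invariant affine chart $U_{\alpha}$ produced by Sumihiro's theorem, the diagonal ideal $J_{\alpha} \subset \O(U_{\alpha}) \otimes \O(U_{\alpha})$ need not be generated by a regular sequence of elements $x_i \otimes 1 - 1 \otimes x_i$. If $\O(U_{\alpha})$ requires $m$ semi-invariant generators with $m$ larger than $\dim U_{\alpha}$, these $m$ elements generate $J_{\alpha}$ but exceed the codimension of the diagonal, so they do not form a regular sequence and the Koszul complex is not a resolution; equivalently, $J_{\alpha}/J_{\alpha}^2 \cong \Omega^1_{U_{\alpha}}$ is projective but need not be free on such a chart. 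This does not sink your argument, because in your scheme the local computation is only needed to verify that the globally defined Atiyah-class map is a quasi-isomorphism, and that verification is local and insensitive to the group action, so you may shrink to (possibly non-invariant) charts where $\Omega^1$ is trivial. But the step should be phrased that way rather than asserted on the Sumihiro charts themselves.
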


Here $\iota : \Delta \to X \times X$ is the inclusion map, $\Omega_X^i$ is the sheaf of differential forms of degree $i$, and $[i]$ is the shift functor in the derived category. 

\begin{proposition}\label{prop1}
Let $t \in H\Omega^{\bullet}(X)$ be the Todd class of $\NN$. Then, twisting with $t^{-1/2}$ gives a $T$-equivariant multiplicative isomorphism $HH^{\bullet}(X) \cong HT^{\bullet}(X) $.
\end{proposition}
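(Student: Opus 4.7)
My plan is to derive Proposition \ref{prop1} by combining Kontsevich's Todd-twisted multiplicative refinement of the HKR theorem (in the precise form worked out by Caldararu and by Calaque--Van den Bergh) with the bare $T$-equivariance established in Proposition \ref{prop0}. The statement splits into two independent claims to verify: that the twist by $t^{-1/2}$ makes the HKR map a ring homomorphism, and that it preserves the $T$-action.

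For the multiplicativity, I would recall that the composition product on $HH^{\bullet}(X)$ does \emph{not} correspond to the wedge product on $HT^{\bullet}(X)$ under the untwisted HKR map of Proposition \ref{prop0}; the discrepancy is measured precisely by contraction with $\sqrt{\td(TX)}$. Quoting the non-equivariant Kontsevich theorem in the Calaque--Van den Bergh formulation then gives a multiplicative isomorphism of bigraded algebras after twisting the HKR quasi-isomorphism by $t^{-1/2}$, which is the purely structural part of the statement.

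For equivariance, the key observation is that the Todd class $t = \td(TX)$ is $T$-fixed. Indeed, $TX$ carries a canonical $T$-equivariant structure obtained by differentiating the $T$-action, and $\td(TX)$ is built from universal polynomials in its Chern classes; any such characteristic class of an equivariant bundle is automatically $T$-equivariant. Since $t$ begins with $1$ in bidegree $(0,0)$, its formal square root and inverse square root are well defined and inherit $T$-equivariance. Combining this with Proposition \ref{prop0}, contraction with $t^{-1/2}$ intertwines the $T$-actions, so the twisted HKR map is simultaneously $T$-equivariant and multiplicative, as claimed.

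The step I expect to be most delicate is verifying that Kontsevich's theorem itself, typically proved in $D^b(X \times X)$ via the Atiyah class and the diagonal embedding, transports cleanly to the $T$-equivariant derived category. All the relevant building blocks --- the diagonal, the structure sheaf, the tangent sheaf, its exterior powers, and the Atiyah class --- are manifestly functorial with respect to the $T$-action, so the Calaque--Van den Bergh argument should go through verbatim; nevertheless, it seems worth spelling out this compatibility rather than citing it as a black box, and this bookkeeping is where most of the actual work of the proof will sit.
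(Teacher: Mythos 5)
Your proposal follows essentially the same route as the paper: quote the non-equivariant Kontsevich/Calaque--Van den Bergh theorem for multiplicativity, and combine it with the $T$-equivariance of the untwisted HKR quasi-isomorphism (Proposition \ref{prop0}, proved via the bar complex on a Sumihiro-invariant affine cover) and the $T$-invariance of the Todd class. Two remarks. First, the invariance of the Todd class is where the paper does its actual work: rather than invoking ``characteristic classes of equivariant bundles are invariant'' as a black box, it observes that the jet-bundle sequence defining the Atiyah class is $T$-equivariant, so the Atiyah class lifts to $\Ext^1$ in the equivariant category, whose image in the ordinary $\Ext^1$ lands in the $T$-invariants (Lemma \ref{inv} and Corollary \ref{cor3}, which rest on semisimplicity of rational $T$-modules); your Chern-class formulation is equivalent, but this is the mechanism you would need to spell out. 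Second, the worry in your final paragraph is misplaced: nothing has to be transported to the $T$-equivariant derived category. The claim is only that one fixed map between non-equivariant objects is simultaneously a ring map (Kontsevich, used as is) and $T$-equivariant (Proposition \ref{prop0} plus Todd invariance), and these are independent properties of the same map. Rerunning the Calaque--Van den Bergh argument inside $D^b(\mathrm{Coh}^T(X))$ would be both unnecessary and potentially treacherous --- the paper itself notes, citing Negron--Schedler, that the genuinely equivariant/orbifold analogue of Kontsevich's theorem can fail even for finite group actions.
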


When no group action is involved, the two statements are well-known (\cite{ginzburg}, section $9$). The first proposition implies the famous Hochschild-Kostant-Rosenberg (HKR) theorem relating the Hochschild cohomology of $X$ with sheaf cohomology of poly-vector fields. The second statement relates the multiplicative structures on both sides. Hence the essence of our results is the compatibility with the action of $T$. Using our first theorem, we can deduce an explicit way to make the Bezrukavnikov-Lachowska isomorphism multiplicative. Setting $X = \NN$ and $T = \C^*$, we will deduce in section $3$ the following theorem, describing geometrically the multiplicative structure of $z_0(\g)$ :

\begin{theorem}\label{thm2} 
The composition \[ HH^{\bullet}(\mathfrak{u}_0(\mathfrak{g})) \xrightarrow{HKR}  \bigoplus_{i+j+k= \bullet} H^i(\NN, \wedge^j T \NN)^k \xrightarrow{\langle -, Todd(\NN)^{-1/2} \rangle } \bigoplus_{i+j+k= \bullet} H^i(\NN, \wedge^j T \NN)^k  \] is a ring isomorphism.
\end{theorem}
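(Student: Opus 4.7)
The plan is to reduce Theorem \ref{thm2} to Proposition \ref{prop1} applied to $X = \NN$ with its natural $\C^*$-action by dilation on the fibres of $\NN \to G/B$. The key preliminary observation is that the Bezrukavnikov--Lachowska morphism of Theorem \ref{thm0} is manufactured as a composite
\[ HH^{\bullet}(\u_0(\g)) \xrightarrow{\phi} HH^{\bullet}(\NN) \xrightarrow{\mathrm{HKR}} HT^{\bullet}(\NN), \]
where the superscript $k$ in the target records the weight of the induced $\C^*$-action. The first arrow $\phi$ is produced in \cite{BL} from a dg/derived equivalence between the relevant categories of modules and coherent sheaves, so $\phi$ is automatically a ring isomorphism preserving the $\C^*$-weight. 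The second arrow is the standard HKR morphism of Proposition \ref{prop0}, which is $\C^*$-equivariant but fails to be multiplicative in general.

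Assuming this factorisation, I would argue as follows. By Proposition \ref{prop1} with $X = \NN$ and $T = \C^*$, the composition
\[ \langle -, \td(\NN)^{-1/2} \rangle \circ \mathrm{HKR} \colon HH^{\bullet}(\NN) \longrightarrow HT^{\bullet}(\NN) \]
is a $\C^*$-equivariant multiplicative isomorphism. Precomposing with $\phi$, the map in the statement of Theorem \ref{thm2} becomes a composite of two ring isomorphisms that each respect the $\C^*$-weight, hence is itself a ring isomorphism compatible with the triple grading $(i,j,k)$.

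The main obstacle I foresee lies in the first step, namely making precise in what sense the BL isomorphism of Theorem \ref{thm0} factors through $HH^{\bullet}(\NN)$ by an algebra homomorphism $\phi$. In \cite{BL} the result is stated only at the level of graded vector spaces, so one must revisit their construction and check that the functors they use are compatible with the Yoneda product, or equivalently that they descend from a genuine dg-algebra quasi-isomorphism. Once this is done, the remaining work has already been carried out in the paper: Proposition \ref{prop0} supplies the $T$-equivariance of the quasi-isomorphism underlying HKR, and Proposition \ref{prop1} promotes the Todd-twisted HKR to a multiplicative isomorphism, together yielding Theorem \ref{thm2}.
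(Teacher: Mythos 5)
Your overall strategy --- reduce everything to the equivariance of the twisted HKR isomorphism (Proposition \ref{prop1}) --- is the right instinct, but the factorization you propose is not the one that occurs, and the discrepancy is not cosmetic. The Bezrukavnikov--Lachowska theorem, in the form the paper uses it (Theorem \ref{thmBL}), is a ring isomorphism onto the \emph{$\C^*$-equivariant} Hochschild cohomology
\[ HH^*_{\C^*}(\NN) = \bigoplus_{q+k=\bullet}\Ext^q_{\mathrm{Coh}^{\C^*}(\NN\times\NN)}(\O_{\Delta},\O_{\Delta}\otimes z^k), \]
not onto the ordinary $HH^{\bullet}(\NN)$. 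The internal weight $k$, carried by the equivariant twist $z^k$, contributes to the \emph{total} cohomological degree; this is exactly why the target in Theorem \ref{thm0} is a sum over $i+j+k=\bullet$ rather than $i+j=\bullet$. A graded ring isomorphism $\phi\colon HH^{\bullet}(\u_0(\g))\to HH^{\bullet}(\NN)$ of the kind you posit cannot exist, because $HH^m(\NN)=\bigoplus_{i+j=m}H^i(\NN,\wedge^jT\NN)$ carries a different grading from $\bigoplus_{i+j+k=m}H^i(\NN,\wedge^jT\NN)^k$. Consequently you cannot simply precompose Proposition \ref{prop1} for $X=\NN$ with a map $\phi$; the equivariance has to be \emph{used} to descend the twisted HKR isomorphism to the equivariant, weight-graded setting.

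Concretely, the paper's proof contains three steps that your outline skips. First, Corollary \ref{cor3} identifies $\Ext^q_{\mathrm{Coh}^{\C^*}}(\O_{\Delta},\O_{\Delta}\otimes z^k)$ with the $\C^*$-invariants of the ordinary $\Ext$ group. Second --- and this is where Lemmas \ref{lemme2} and \ref{lemme3} actually enter --- since $I_{HKR}$ is $\C^*$-equivariant and the Todd class is $\C^*$-invariant, Kontsevitch's ring isomorphism restricts to a ring isomorphism between these invariant subspaces, yielding $\bigoplus_{q+k}\Ext^q(\O_{\Delta},\O_{\Delta}\otimes z^k)\cong\bigoplus_{i+j+k}H^i_{\C^*}(\NN,\wedge^jT\NN\otimes z^k)$ with the correct total grading. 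Third, the final identification $H^i_{\C^*}(\NN,\wedge^jT\NN\otimes z^k)\cong H^i(G/B,(pr_*\wedge^jT\NN)^k)$, obtained from Lemma \ref{trivial} and the affine projection $pr\colon\NN\to G/B$, must itself be checked to be multiplicative; the paper deduces this from the monoidality of $pr_*$ for an affine morphism. Your stated worry about whether the BL map is multiplicative is legitimate but is resolved by the form of Theorem \ref{thmBL}; the substantive work in Theorem \ref{thm2} is the weight bookkeeping described above, which your proposal does not address.
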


Taking the zero degree part of the previous isomorphism gives the following corollary:

\begin{corollary}\label{cor2}
The natural bigraded vector space isomorphism \[ HH^0(\u_0(\g)) \cong \bigoplus_{i+j+k= 0} H^i(\NN, \wedge^j T \NN)^k \] can be upgraded to a multiplicative isomorphism using a twist.
\end{corollary}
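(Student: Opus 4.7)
The plan is to deduce this directly from Theorem~\ref{thm2} by restricting to the total-degree-zero slice. The composition in Theorem~\ref{thm2} is a ring isomorphism of $\Z$-graded rings, where the grading on the target is the total degree $\bullet = i+j+k$. Both the HKR map and the contraction $\langle -, Todd(\NN)^{-1/2}\rangle$ preserve this total grading, so the composition restricts to an isomorphism on each homogeneous component. Setting $\bullet = 0$ will therefore yield a ring isomorphism
\[ HH^0(\u_0(\g)) \xrightarrow{\sim} \bigoplus_{i+j+k=0} H^i(\NN, \wedge^j T\NN)^k, \]
where the target inherits its ring structure as the degree-zero subring of the full polyvector algebra.

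To match the statement of the corollary, I would then identify the HKR map in total degree zero with the ``natural bigraded vector space isomorphism'' mentioned in the statement --- the HKR map in fact respects the full trigrading $(i,j,k)$, so this identification is automatic --- and identify the contraction with $Todd(\NN)^{-1/2}$ with the twist. The composition of the two gives the desired multiplicative upgrade. Note that while HKR respects the trigrading, the Todd twist generally mixes the individual indices $(i,j,k)$ and only preserves their sum; this is consistent with the corollary, which does not claim the multiplicative isomorphism itself to be bigraded.

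I do not anticipate any real obstacle: the corollary is essentially the degree-zero shadow of the main theorem, and the fact that $HH^0(\u_0(\g))$ is commutative (being the center of $\u_0(\g)$) furthermore implies that the resulting ring isomorphism lands in the commutative part of the right-hand side, which is a mild but useful consistency check.
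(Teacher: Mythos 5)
Your proposal is correct and matches the paper's own argument, which simply takes the total-degree-zero part of the ring isomorphism of Theorem~\ref{thm2}. Your additional remarks on the trigrading and the commutativity of $HH^0(\u_0(\g))$ are accurate elaborations but not needed.
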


The theorem \ref{thm2} and the corollary \ref{cor2} also hold for singular blocks, where one needs to twist by the corresponding ``parabolic Todd class". \\

Let us briefly outline the contents of this paper : in section $2$ we study equivariant sheaves on smooth varieties, give background material for the Kontsevitch theorem and prove theorem \ref{thm1}. In section $3$ we explain the connection with the small quantum group, and derive some consequences for the structure of $z_0(\g)$.

\subsection*{Acknowledgements}

I would like to thanks Anna Lachowska and Qi You for support and suggesting the project. I would also like to thanks Pieter Belmans, Damien Calaque, Dmitriy Rumynin and Andras Szenes for helpful and interesting discussions. 

\section{Geometry}

We first recall a theorem by Sumihiro :

\begin{theorem}\label{sumihiro}\cite{sumihiro}
Let $T$ be a torus and $X$ a complex $T$-algebraic variety. Then, there is an open cover $\mathfrak{U}$ of $X$, which is $T$-invariant (i.e for all $U \in \mathfrak{U}$, we have $TU \subset U$).
\end{theorem}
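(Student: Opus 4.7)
Since the statement as literally written (a $T$-stable open cover) is trivially satisfied by $\mathfrak{U} = \{X\}$, I take the intended content to be the full Sumihiro theorem: each point $x \in X$ admits a $T$-stable \emph{affine} (or quasi-projective) open neighborhood in $X$. The plan is to produce such a neighborhood for every $x$ and take their collection as the desired cover.

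A preliminary reduction removes non-normality. Since any $T$-action lifts uniquely to the normalization and $T$-stable affine opens descend along finite equivariant surjections, it suffices to treat the normal case. Here $X$ is in fact smooth, so no reduction is actually needed.

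The main argument proceeds by compactification and linearization. Fix $x \in X$ and construct a $T$-equivariant projective completion $\bar X \supset X$, obtained from Nagata's theorem combined with the extension of the $T$-action after an equivariant blow-up, an extension that uses $\mathrm{Pic}(T) = 0$. Choose any affine open $V \ni x$ of $\bar X$; its complement $\bar X \setminus V$ is the support of an effective Cartier divisor $D$. For $m$ sufficiently divisible, the line bundle $\O_{\bar X}(mD)$ admits a $T$-linearization, since the obstruction lives in the character group $\mathrm{Hom}(T, \mathbb{G}_m)$ and is annihilated on passing from $D$ to $mD$. The finite-dimensional $T$-module $H^0(\bar X, \O(mD))$ then decomposes into weight spaces; I would pick a weight vector $s$ with $s(x) \neq 0$ and $s|_{\bar X \setminus V} = 0$, whose existence follows because the canonical section vanishing on $mD$ has at least one weight component nonvanishing at $x$. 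The open set $\{s \neq 0\}$ is $T$-stable, affine (being the non-vanishing locus of a section of a line bundle on a projective variety), and contains $x$; intersecting with $X$ yields the required $T$-stable affine open of $X$ through $x$.

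The principal technical obstacle is the construction of the $T$-equivariant compactification $\bar X$ and the $T$-linearization of $\O_{\bar X}(mD)$; both are standard in the equivariant algebraic geometry literature but are the nontrivial inputs, without which the remaining argument — formal weight-decomposition of a finite-dimensional $T$-representation — could not even be set up.
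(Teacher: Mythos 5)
The paper offers no proof of this statement: Theorem~\ref{sumihiro} is imported verbatim from Sumihiro's paper \cite{sumihiro}, so there is no internal argument to compare yours against. I will therefore assess your sketch on its own terms. You are right that the statement as printed is vacuous ($\mathfrak{U}=\{X\}$ works) and that the content actually used later (in Lemma~\ref{lemme2}) is the existence of a $T$-stable \emph{affine} open cover; that is a good catch, and your overall strategy --- equivariant completion, linearization of a power of the boundary divisor, weight decomposition of sections --- is indeed the skeleton of the Sumihiro/Knop--Kraft--Luna--Vust argument.

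However, the decisive step of your sketch has a genuine gap. You choose an arbitrary affine open $V\ni x$ in $\bar X$, set $D$ with support $\bar X\setminus V$, and then claim to find a $T$-\emph{weight} vector $s\in H^0(\bar X,\O(mD))$ with $s(x)\neq 0$ and $s|_{\bar X\setminus V}=0$. But $V$, hence $\bar X\setminus V$, is not $T$-stable, so the subspace of sections vanishing on $\bar X\setminus V$ is not a $T$-submodule; the weight components of the canonical section of $\O(mD)$ have no reason to vanish there, and your ``at least one weight component nonvanishing at $x$'' only produces a weight vector $s$ with $s(x)\neq 0$, with no control on its zero locus. Two further problems compound this: $\{s\neq 0\}$ is affine only if the divisor one is sectioning is ample (the complement of an affine open need not support an ample divisor), and even then $\{s\neq 0\}\cap X$ is an affine minus a closed subset, hence not affine in general. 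The standard repair is to first prove the quasi-projective local statement, embed a $T$-stable quasi-projective neighborhood $T$-equivariantly and \emph{closedly} into some $\P(W)$ with $W$ a $T$-module (this is where linearization of an ample bundle is really used), and then cut by a $T$-eigenvector of $W^*$ not vanishing at $x$ --- there the eigenvector exists for the formal reason you cite, and its nonvanishing locus is automatically $T$-stable and affine. Two smaller corrections: the obstruction to linearizing a power of a line bundle lies in $\mathrm{Pic}(T)$ (which vanishes), while the character group $\Hom(T,\mathbb{G}_m)$ acts simply transitively on the set of linearizations, not on the obstruction; and normality cannot be disposed of by ``descending stable affines along the normalization'' --- the theorem is genuinely false for non-normal varieties (e.g.\ the nodal cubic with its $\mathbb{G}_m$-action), so normality (automatic here since $X$ is smooth) is a hypothesis, not a removable reduction.
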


This theorem reduces many statements about algebraic varieties with a $T$-action to an affine statement about graded $R$-modules that are easier to prove. 

\subsection{Equivariant sheaves}

Let $X$ be a variety over the complex numbers, with an algebraic action of a complex reductive group $K$ (which is not related to the Lie algebra $\g$ mentioned at the beginning of the introduction). We will use $\rm{QCoh}^K(X)$ for the category of quasi-coherent $K$-equivariant sheaves. First we recall the definitions and standard facts of an equivariant sheaf : 

\begin{definition}
A sheaf $\F \in \rm{QCoh}(X)$ is $K$-equivariant if there is an isomorphism $\theta :m^*\F \cong pr_2^*\F$, where $m, pr_2 : K \times X \to X$ are respectively the action and the projection (moreover, $\theta$ should satisfies a cocycle condition).
\end{definition}

Taking the stalks of $\theta$ at $(g,x) \in K \times X$ gives an isomorphism $\theta_x : \F_{g \cdot x} \cong \F_x$. A  $K$-equivariant sheaf $\F$ is equivalent to the data of a sheaf $\F$ with such isomorphisms. We also have 

\begin{lemma}\label{eqsheaf}
A $K$-equivariant sheaf $\F$ is equivalent to a sheaf $\F$ with maps $g : \Gamma(U, \F) \to \Gamma(g(U), \F)$ for each $g \in T$ and $U$ open, commuting with restrictions and such that $g_1 \cdot (g_2 \cdot s) = (g_1 \cdot g_2) \cdot s$ for all $g_1, g_2 \in K$ and all $s \in \F(U)$. 
\end{lemma}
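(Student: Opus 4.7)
The plan is to unpack the cocycle datum $\theta$ into a concrete collection of maps on sections, the converse being analogous. For each $g \in K$, I would consider the section $i_g : X \hookrightarrow K \times X$, $x \mapsto (g,x)$, and observe that $m \circ i_g = \alpha_g$, where $\alpha_g : X \to X$ denotes multiplication by $g$, while $pr_2 \circ i_g = \mathrm{id}_X$. Pulling $\theta$ back along $i_g$ therefore yields an isomorphism $\alpha_g^* \F \xrightarrow{\sim} \F$. Since $\alpha_g$ is an automorphism of $X$, one has $(\alpha_g^*\F)(U) = \F(g U)$ for each open $U \subset X$, so taking sections and inverting produces the desired map $g \cdot : \F(U) \to \F(gU)$. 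Compatibility with restrictions is then automatic from functoriality of pullback.

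The multiplicativity $g_1 \cdot (g_2 \cdot s) = (g_1 g_2) \cdot s$ will be precisely the cocycle condition: the latter equates two pullbacks of $\theta$ along the parallel morphisms $K \times K \times X \to K \times X$ induced by $(g_1, g_2, x) \mapsto (g_1 g_2, x)$ and $(g_1, g_2, x) \mapsto (g_1, g_2 x)$, and pulling further back along the section $X \hookrightarrow K \times K \times X$, $x \mapsto (g_1, g_2, x)$, extracts exactly the sought associativity identity on sections.

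For the reverse direction, given a compatible action on sections, I would reconstruct $\theta$ by working Zariski-locally: by Sumihiro's theorem \ref{sumihiro}, $X$ admits a cover by $K$-invariant affine opens $U$, on which $\F$ corresponds to an $\mathcal{O}(U)$-module $M$ carrying an $\mathcal{O}(K)$-comodule structure (equivalent to the collection of section maps), and the isomorphism $\theta$ is read off tautologically from this comodule datum; the cocycle condition then reduces to coassociativity, and the two constructions are mutually inverse by inspection.

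The only real obstacle is bookkeeping with pullbacks, but since all relevant morphisms are flat -- in fact $\alpha_g$ and $i_g$ are isomorphisms onto their images -- pullback on quasi-coherent sheaves behaves naively on sections, and the reduction to $K$-invariant affine opens turns the statement into a formal algebraic equivalence between $K$-equivariant quasi-coherent sheaves and $(\mathcal{O}(U), \mathcal{O}(K))$-comodule algebras compatible with restriction.
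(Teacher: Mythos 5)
The paper offers no proof of this lemma at all: it is stated as standard folklore immediately after the remark that $\theta$ can be read off on stalks, so there is nothing to compare your argument against except the standard one. Your forward direction is exactly that standard unpacking and is correct: restricting $\theta : m^*\F \cong pr_2^*\F$ along $i_g : x \mapsto (g,x)$ gives $\alpha_g^*\F \cong \F$, hence (after inverting) maps $\F(U) \to \F(gU)$, and the cocycle condition on $K \times K \times X$ specializes to the associativity identity.

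The genuine gap is in your converse, and it is concentrated in the parenthetical ``an $\mathcal{O}(K)$-comodule structure (equivalent to the collection of section maps)''. A family of isomorphisms indexed by the closed points $g \in K$, satisfying the group law, is strictly weaker than a comodule structure: nothing in the lemma's hypotheses forces the dependence on $g$ to be algebraic. Already for $X = \mathrm{pt}$, $K = \C^*$ and $\F$ a one-dimensional vector space, the data in the lemma is an abstract group homomorphism $\C^* \to \GL_1(\C)$, which need not be a rational character (e.g.\ $g \mapsto \bar g$), and then no $\theta$ exists. So the step ``read off $\theta$ tautologically from the comodule datum'' presupposes the coaction $M \to M \otimes \O(K)$, which is precisely what has to be produced and cannot be, without adding a rationality hypothesis to the statement. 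To be fair, this is a defect of the lemma as stated (the paper uses it only for sheaves that already carry a genuine equivariant structure, so nothing downstream breaks), but a proof claiming the literal equivalence must either impose algebraicity of the section maps in $g$ or acknowledge that only the forward direction and the uniqueness of $\theta$ (by density of closed points) hold. Two smaller points: Sumihiro's theorem requires $X$ normal (and is stated here for tori, whereas the lemma concerns general reductive $K$), and it is not actually needed --- once algebraicity is granted, $\theta$ can be defined over an arbitrary affine cover of $K \times X$ by naturality, without invariant opens.
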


\begin{corollary}
There is a natural structure of $K$-modules on $\Gamma(X, \F)$ for any equivariant sheaf $\F$. More generally, there is a structure of $K$-modules on $Hom_{\rm{QCoh(X)}}(\F_1, \F_2)$ if $\F_i \in \rm{QCoh}^K(X)$.
\end{corollary}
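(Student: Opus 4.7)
The claim has two parts, and both will follow directly from \cref{eqsheaf} once one writes down the natural formulas and checks compatibilities.

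For the first part, I would simply apply \cref{eqsheaf} to the open set $U = X$. Since $X$ itself is $K$-stable ($g \cdot X = X$ for every $g \in K$), the lemma produces maps $g : \Gamma(X, \F) \to \Gamma(X, \F)$ for each $g \in K$, and the associativity condition $g_1 \cdot (g_2 \cdot s) = (g_1 g_2) \cdot s$ is exactly the statement that this is a left action.

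For the second part, given $\varphi \in \Hom_{\rm{QCoh}(X)}(\F_1, \F_2)$ and $g \in K$, I would define
\[ (g \cdot \varphi)_U(s) := g \cdot \varphi_{g^{-1}(U)}\bigl(g^{-1}\cdot s\bigr) \qquad \text{for } s \in \F_1(U). \]
Here $g^{-1}\cdot s \in \F_1(g^{-1}(U))$ by the lemma (applied to $\F_1$), then $\varphi_{g^{-1}(U)}$ sends it into $\F_2(g^{-1}(U))$, and finally $g \cdot$ lands in $\F_2(U)$ (using the lemma applied to $\F_2$). The main checks are: (i) that this family of maps commutes with restrictions, which reduces to the assertion in \cref{eqsheaf} that each $g$ and $g^{-1}$ commute with restriction, combined with the fact that $\varphi$ itself does; (ii) that it is a group action, which is a direct calculation unpacking both occurrences of the definition and using the cocycle condition twice.

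The main obstacle is not the formal definition but making sure the resulting $K$-action has the correct algebraic meaning: one typically wants $\Gamma(X, \F)$ and $\Hom_{\rm{QCoh}(X)}(\F_1, \F_2)$ to be \emph{rational} $K$-modules, not just abstract $K$-sets. For this I would use Sumihiro's theorem (\cref{sumihiro}) to choose a $K$-invariant affine open cover $\mathfrak{U}$ of $X$; on each $U \in \mathfrak{U}$ the equivariant sheaf corresponds to a $K$-equivariant $\O(U)$-module, where $\O(U)$ carries a rational $K$-action, and standard affine theory identifies $\Gamma(U, \F)$ as a rational $K$-module. The rationality of $\Gamma(X,\F)$ then follows by the sheaf axiom, since $\Gamma(X,\F)$ is a subspace of a product of rational $K$-modules cut out by $K$-equivariant relations; the case of $\Hom$ is analogous, once it is expressed as an equaliser of the $\Gamma(U,\underline{\Hom}(\F_1,\F_2))$ for $U$ in the cover.
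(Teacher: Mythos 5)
Your proof is correct and takes essentially the same route as the paper: the first part is Lemma \ref{eqsheaf} applied to $U = X$, and your formula $(g\cdot\varphi)_U(s) = g\cdot\varphi_{g^{-1}(U)}(g^{-1}\cdot s)$ is exactly the paper's definition $g\cdot\varphi = g\circ\varphi\circ g^{-1}$. Your closing discussion of rationality goes beyond what the corollary itself claims (the paper defers the algebraicity of the action to Lemma \ref{semisimple}), but it is a reasonable supplement rather than a deviation.
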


\begin{proof}
The first point is clear. The action in general is defined as follow : if $\varphi : \F_1(U) \to \F_2(U)$, we define $g \cdot \varphi = g \circ \varphi \circ g^{-1}$, where $g$ denotes the map in lemma \ref{eqsheaf}.
\end{proof}

\begin{definition}
The category $\rm{QCoh}^K(X)$ is the category where objects are $K$-equivariant quasi-coherent sheaves of $\O_X$-modules, and morphisms are equivariant morphisms of $\O_X$-modules, i.e. the maps $f : \F_1 \to \F_2$ such that that $f(g \cdot s) = g \cdot f(s)$ for all $s \in \F_1(U)$ and all $g \in T$.
\end{definition} 

To give more intuition, we recall that there is an equivalence of categories between the category of locally free coherent $K$-equivariant sheaves and the category of finite-dimensional vector bundles with an action of $K$, linear on each fibers, where morphisms are $K$-equivariant morphisms of vector bundles (see \cite{CG}, chapter $5$). \\ 

\begin{lemma}\label{semisimple}
The $K$-module $V := Hom_{\rm{QCoh}(X)}(\F_1, \F_2)$ is semisimple. Moreover there is an isomorphism $Hom_{\rm{QCoh}^K(X)}(\F_1, \F_2) = (Hom_{\rm{QCoh}(X)}(\F_1, \F_2))^K$. 
\end{lemma}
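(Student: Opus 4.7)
I would handle the two claims in reverse order. The identification $\Hom_{\rm{QCoh}^K(X)}(\F_1,\F_2) = V^K$ is essentially tautological: by the formula $g\cdot\varphi = g\circ\varphi\circ g^{-1}$ from the corollary preceding the lemma, the condition $g\cdot\varphi=\varphi$ for every $g\in K$ is exactly the equivariance condition $\varphi(g\cdot s) = g\cdot\varphi(s)$ used to define morphisms in $\rm{QCoh}^K(X)$, so this reduces to unpacking definitions.

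For the semisimplicity statement, the plan is to show that the $K$-action on $V$ is \emph{rational} (i.e.\ locally finite and algebraic) and then invoke the complete reducibility of rational representations of a reductive complex algebraic group (linear reductivity). I would begin by invoking Sumihiro's theorem (Theorem \ref{sumihiro}) to cover $X$ by $K$-invariant affine opens $\{U_\alpha = \Spec A_\alpha\}$; in the general reductive setting this uses the additional hypothesis of the existence of an invariant affine cover stated in the introduction. On each $U_\alpha$, a $K$-equivariant quasi-coherent sheaf is the same as an $A_\alpha$-module with a compatible $\O(K)$-comodule structure, so in particular each $\F_i(U_\alpha)$ is a rational $K$-module. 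Via the sheaf axiom, $V$ embeds as a $K$-stable subspace of $\prod_\alpha \Hom_{A_\alpha}(\F_1(U_\alpha),\F_2(U_\alpha))$ with the conjugation $K$-action on each factor, so it suffices to show each local Hom space is rational. In the coherent case — the one relevant for the rest of the paper — $\F_1(U_\alpha)$ is finitely generated, and one can enlarge any finite generating set to a $K$-stable finite-dimensional one using rationality of $\F_1(U_\alpha)$; evaluation against this generating set gives a $K$-equivariant embedding of $\Hom_{A_\alpha}(\F_1(U_\alpha),\F_2(U_\alpha))$ into finitely many copies of $\F_2(U_\alpha)$, which is rational. Submodules and products of rational representations being rational, $V$ itself is then rational, and complete reducibility yields semisimplicity.

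The main obstacle I expect is the rationality step. For finite-dimensional equivariant vector bundles (the picture invoked just before the lemma via the correspondence with locally free equivariant sheaves) it is transparent, and the sketch above handles coherent $\F_1$; in full quasi-coherent generality one must write $\F_1(U_\alpha)$ as a filtered colimit of $K$-stable finitely generated submodules and show that the resulting inverse system of rational Hom spaces still has locally finite limit. This is routine once set up correctly, but is the one place where some care is required beyond citing standard facts about reductive groups.
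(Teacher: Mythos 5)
Your proposal is correct and follows essentially the same route as the paper: the paper's one\hyphen line proof asserts that the induced $K$-action on $V$ is algebraic (a comodule structure $V \to V \otimes \O(K)$), deduces that $V$ is the union of its finite-dimensional submodules and hence semisimple by reductivity of $K$, and notes the second claim is definitional --- exactly your strategy, with your Sumihiro/local-Hom analysis supplying the rationality details the paper leaves implicit. One small caution: an \emph{infinite} product of rational representations need not be locally finite, so you should take the invariant affine cover to be finite (possible since $X$ is quasi-compact) before embedding $V$ into the product of the local Hom spaces.
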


\begin{proof}
The induced $K$-action is algebraic, i.e given by a ring map $V \to V \otimes \O(K)$. It follows easily that $V$ is the union of its finite-dimensional $K$-modules, and hence semi-simple. The second part follows from the definition of the $K$-action. 
\end{proof}

We now investigate basic properties of $K$-equivariant sheaves in order to be able to describe derived functors, now computed in the equivariant category. 

\begin{remark}
Let us emphasize that in geometric representation theory, the usual definition of the "equivariant derived category" (as defined e.g in Bernstein-Lunts :\cite{BernsteinLunts}) does not  coincide with the derived category of equivariant sheaves (the one we consider here). 
\end{remark}

\begin{lemma}
For each $\F\in \rm{QCoh}^K(X)$, there is an injective object $I \in \rm{QCoh}^K(X)$ and a $K$-equivariant monomorphism $\F \to I$. Moreover $\Fgt(I) \in \rm{QCoh(X)}$ is still injective, where $\Fgt : \rm{QCoh}^K(X) \to \rm{QCoh}(X)$ is the forgetful functor.
\end{lemma}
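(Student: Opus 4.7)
The approach is a standard two-step argument: first construct a coinduction functor $R : \rm{QCoh}(X) \to \rm{QCoh}^K(X)$ that is right adjoint to $\Fgt$, and then deduce both claims from formal properties of adjunctions, with the second requiring an explicit analysis of $R$.

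By Sumihiro's theorem (Theorem \ref{sumihiro}) one reduces to the affine case $X = \Spec(A)$: an equivariant injective envelope of $\mathcal{F}$ whose forgetful is still injective can be assembled from local ones by taking the product of pushforwards along the $K$-invariant open immersions, since these pushforwards preserve injectivity on both sides. In the affine case $\rm{QCoh}^K(X) \simeq (A,K)\text{-Mod}$, and $R$ is defined by $R(N) := N \otimes_\mathbb{C} \O(K)$ as a $\mathbb{C}$-vector space, with $K$-action by right translation on $\O(K)$ and with $A$-action twisted by the coaction $\delta : A \to A \otimes \O(K)$ dual to the $K$-action on $X$: writing $\delta(a) = \sum a_{(0)} \otimes a_{(1)}$ in Sweedler notation, one sets $a \cdot (n \otimes f) := \sum (a_{(0)} \cdot n) \otimes (a_{(1)} f)$. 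The adjunction $\Hom_{(A,K)}(M, R(N)) \cong \Hom_A(\Fgt M, N)$ is obtained by composing with evaluation at the identity $e \in K$. Since $\Fgt$ is exact, its right adjoint $R$ preserves injective objects; taking any embedding $\iota : \Fgt(\mathcal{F}) \hookrightarrow \mathcal{J}$ into an injective $A$-module, the composition of the adjunction unit $\mathcal{F} \to R(\Fgt(\mathcal{F}))$ with $R(\iota)$ gives a $K$-equivariant monomorphism $\mathcal{F} \hookrightarrow I := R(\mathcal{J})$ into an injective object.

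The main obstacle is the second claim, namely that $\Fgt(I)$ remains injective in $\rm{QCoh}(X)$. The key observation is that the change of variables $\sigma : K \times X \to K \times X$, $(k, x) \mapsto (k, k \cdot x)$, intertwines the twisted $A$-module structure on $\mathcal{J} \otimes_\mathbb{C} \O(K)$ underlying $R(\mathcal{J})$ with the \emph{untwisted} one (where $A$ acts on $\mathcal{J}$ alone). Under this identification one has $\Fgt(R(\mathcal{J})) \cong \mathcal{J} \otimes_\mathbb{C} \O(K)$ as $A$-modules, and by the Peter--Weyl decomposition $\O(K) = \bigoplus_\lambda V_\lambda \otimes V_\lambda^*$ this exhibits $\Fgt(I)$ as a direct sum of copies of $\mathcal{J}$. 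Since $A$ is Noetherian ($X$ being a variety), the Bass--Papp theorem implies that this direct sum of injective $A$-modules is itself injective, completing the proof.
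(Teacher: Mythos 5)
The first half of your argument is sound and is essentially the route the paper has in mind (the paper itself only cites Grothendieck's Tohoku paper here): the coinduction functor $R(N)=N\otimes_{\C}\O(K)$ is right adjoint to the exact, faithful functor $\Fgt$, hence preserves injectives, and the unit composed with $R(\iota)$ gives the required equivariant monomorphism. One caveat on the reduction step: Theorem \ref{sumihiro} is stated for tori, and for a general reductive $K$ an invariant \emph{affine} cover need not exist (e.g.\ $G$ acting on $G/B$ has no proper nonempty invariant open), so this step silently restricts the generality of the lemma as stated.

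The second half has a genuine gap. The claimed intertwining by $\sigma(k,x)=(k,kx)$ would identify the $A$-module obtained by restricting $\mathcal J\otimes_{\C}\O(K)$ along the coaction $\delta=\sigma^{*}\circ(a\mapsto 1\otimes a)$ with the one obtained by restricting along $a\mapsto 1\otimes a$; unwinding this, it requires an isomorphism $m^{*}\widetilde{\mathcal J}\cong pr_X^{*}\widetilde{\mathcal J}$ on $K\times X$, i.e.\ a $K$-equivariant structure on $\mathcal J$ --- which a general injective $A$-module does not admit ($\sigma^{*}$ acts on functions, not on sections of an arbitrary module). Concretely, for $K=\mathbb{G}_m$ acting on $X=\Spec\C[x]$ by scaling and $\mathcal J=\C[x]/(x-1)$, your $R(\mathcal J)$ has underlying $A$-module $\C[t,t^{-1}]$ with $x$ acting as multiplication by $t$, i.e.\ $\C[x,x^{-1}]$, which is not a direct sum of copies of $\mathcal J$ (it is not even supported at $x=1$). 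In general $\Fgt\circ R$ is $N\mapsto m_{*}pr_X^{*}N$, not $N\mapsto pr_{X*}pr_X^{*}N$, and since $pr_X^{*}$ does not preserve injectives ($\O(K)$ is not self-injective once $\dim K>0$), the Peter--Weyl/Bass--Papp conclusion does not follow. The problem is not merely with your argument: for $K=\mathbb{G}_m$ acting on itself by translation, $\mathrm{QCoh}^{K}(K)\simeq\mathrm{Vect}$ and every object has free, hence non-injective, underlying $\C[t,t^{-1}]$-module, so \emph{no} nonzero injective of the equivariant category has injective image under $\Fgt$. So the second assertion needs extra hypotheses or a weaker conclusion (e.g.\ that $\Fgt(I)$ is acyclic for the relevant functors $\mathrm{Hom}(\F_1,-)$, which is what Corollary \ref{cor3} actually uses), and your proof of it cannot be repaired as written.
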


\begin{proof}
The first part is proposition 5.1.2 in \cite{grothendieck}. The second part follows from the explicit construction of $I$ in the same paper.
\end{proof}

This lemma ensure that we can consider right derived functors of $Hom(\F_1,-)$.

\begin{corollary}
Let $\F_1 \in \rm{Coh(X)}^K(X)$ and $\Gamma_1 = Hom_{\rm{QCoh^K(X)}}(\F_1,-) : \rm{Coh^K(X)} \to K-Mod$. Then there is a canonical isomorphism $R(\Gamma_1 \circ \Fgt) \cong R \Gamma_1 \circ \Fgt$.
\end{corollary}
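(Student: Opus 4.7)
The plan is to prove the corollary by reducing everything to an explicit injective resolution, leveraging directly the two parts of the preceding lemma. Given $\F \in \rm{Coh}^K(X)$, I would first choose an injective resolution $\F \to I^\bullet$ in $\rm{QCoh}^K(X)$; such a resolution exists by the first half of the preceding lemma. By definition, $R\Gamma_1(\F)$ is then represented by the complex of $K$-modules $\Hom_{\rm{QCoh}^K(X)}(\F_1, I^\bullet)$.

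Next, by the second half of the preceding lemma, each $\Fgt(I^i)$ is injective in $\rm{QCoh}(X)$, so $\Fgt(I^\bullet)$ is an injective resolution of $\Fgt(\F)$ in the non-equivariant category. Hence the non-equivariant derived Hom $R\Hom_{\rm{QCoh}(X)}(\F_1,\Fgt(\F))$, which is what $R\Gamma_1 \circ \Fgt$ computes once one remembers the $K$-action from the corollary preceding Lemma \ref{semisimple}, is represented by the complex $\Hom_{\rm{QCoh}(X)}(\F_1, \Fgt(I^\bullet))$.

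To compare the two complexes I would invoke Lemma \ref{semisimple}: for each $i$ it provides the natural identification $\Hom_{\rm{QCoh}^K(X)}(\F_1, I^i) = \Hom_{\rm{QCoh}(X)}(\F_1, I^i)^K$, and asserts that the ambient $K$-module is semisimple. Since the functor of $K$-invariants is exact on semisimple $K$-modules, it commutes with the formation of cohomology, and applying it termwise yields the canonical isomorphism claimed in the corollary.

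The only genuine obstacle, modest as it is, lies in bookkeeping: one must check that the isomorphism is natural in $\F$ and independent (up to canonical isomorphism) of the chosen resolution. Both points follow from the standard homotopy-invariance of injective resolutions in $\rm{QCoh}^K(X)$ and $\rm{QCoh}(X)$, together with the observation that the comparison map above is itself functorial in $\F$ at the level of the Hom complexes. No additional ideas beyond the preceding lemma and Lemma \ref{semisimple} are required.
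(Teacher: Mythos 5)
Your first two paragraphs contain exactly the argument this corollary rests on: the statement is given without a separate proof in the paper precisely because it is the resolution-level consequence of the preceding lemma, namely that $\Fgt$ is exact and carries an equivariant injective resolution $\F \to I^\bullet$ to an injective resolution $\Fgt(\F) \to \Fgt(I^\bullet)$, so that the single complex $\Hom_{\mathrm{QCoh}(X)}(\F_1, \Fgt(I^\bullet))$ computes both $R(\Gamma_1 \circ \Fgt)(\F)$ (derive the composite using equivariant injectives) and $(R\Gamma_1)(\Fgt(\F))$ (forget first, then derive). This is the concrete form of the Grothendieck composition formula $R(F \circ F') \cong RF \circ RF'$ that the paper invokes in the proof of Corollary \ref{cor3}. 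Note that the statement as printed is slightly garbled: with $\Gamma_1 = \Hom_{\mathrm{QCoh}^K(X)}(\F_1, -)$ the composite $\Gamma_1 \circ \Fgt$ does not typecheck, so the only coherent reading is the one just described, with $\Gamma_1$ the non-equivariant Hom functor carrying the induced $K$-module structure on its values.

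The difficulty is your third paragraph. There you compare $\Hom_{\mathrm{QCoh}^K(X)}(\F_1, I^\bullet)$ with the $K$-invariants of $\Hom_{\mathrm{QCoh}(X)}(\F_1, I^\bullet)$ and pass invariants through cohomology using semisimplicity. That establishes $\Ext^q_{\mathrm{QCoh}^K(X)}(\F_1,\F) \cong (\Ext^q_{\mathrm{QCoh}(X)}(\F_1,\F))^K$, which is the subsequent Corollary \ref{cor3}, not the present one: the isomorphism asserted here relates two functors valued in $K$-modules and involves no passage to invariants, and since the $K$-action on $\Ext^q_{\mathrm{QCoh}(X)}(\F_1,\F_2)$ is nontrivial in general the two statements are not interchangeable. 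In short, you have proved the right things but attached them to the wrong label: the corollary you were asked to prove is already finished at the end of your second paragraph, once you observe that both sides are computed by the same complex $\Hom_{\mathrm{QCoh}(X)}(\F_1,\Fgt(I^\bullet))$, and the material of your third paragraph is the proof of the next corollary. Your remarks on naturality and independence of the resolution are fine as stated.
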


\begin{corollary}\label{cor3}
For any equivariant sheaf $\F_1$, there is an isomorphism between the functors $\rm{QCoh}^K(X) \to \rm{Vect}$ : $$R(\rm{Inv} \circ  \Gamma_{\F_1}) \cong \rm{Inv} \circ R\Gamma_{\F_1} $$
where $\Gamma_{\F_1} := Hom_{\rm{QCoh(X)}}(\Fgt(\F_1),\Fgt(-))$, and $Inv : K-Mod \to \rm{Vect}$, $Inv(M) = M^K$. In particular if $\F_1,\F_2$ are $K$-equivariant sheaves on $X$, there is an isomorphism $$Ext^q_{\rm{QCoh}^K(X)}(\F_1, \F_2) \cong (Ext^q_{\rm{QCoh}(X)}(\F_1, \F_2))^K$$
\end{corollary}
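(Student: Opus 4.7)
The plan is to compute both sides via a single injective resolution in $\rm{QCoh}^K(X)$ and to use reductivity of $K$ to establish that taking $K$-invariants commutes with cohomology. By Lemma \ref{semisimple}, the composition $\rm{Inv} \circ \Gamma_{\F_1}$ is naturally identified with $Hom_{\rm{QCoh}^K(X)}(\F_1, -)$, so its right derived functor is $R Hom_{\rm{QCoh}^K(X)}(\F_1, -)$. To identify this with $\rm{Inv} \circ R\Gamma_{\F_1}$, I would appeal to a Grothendieck-type composition-of-derived-functors argument: it suffices to show that $\Gamma_{\F_1}$ sends injectives of $\rm{QCoh}^K(X)$ to $\rm{Inv}$-acyclics.

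Concretely, fix $\F_2 \in \rm{QCoh}^K(X)$ and choose an injective resolution $\F_2 \to I^\bullet$ in $\rm{QCoh}^K(X)$. By the preceding lemma, $\Fgt(I^\bullet)$ is still a resolution of $\Fgt(\F_2)$ by injective objects of $\rm{QCoh}(X)$, so $R\Gamma_{\F_1}(\F_2)$ is represented, as a complex of $K$-modules, by $Hom_{\rm{QCoh}(X)}(\F_1, I^\bullet)$. Each term of this complex is a semisimple rational $K$-module by Lemma \ref{semisimple}, hence the key exactness: since $K$ is reductive, every rational $K$-module is a sum of isotypic components and $\rm{Inv}$ simply picks out the trivial isotypic component, which is an exact operation on such modules. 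Therefore $\rm{Inv}$ commutes with the formation of the cohomology of $Hom_{\rm{QCoh}(X)}(\F_1, I^\bullet)$.

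Applying $\rm{Inv}$ termwise and invoking the second part of Lemma \ref{semisimple} one more time to identify $Hom_{\rm{QCoh}(X)}(\F_1, I^k)^K$ with $Hom_{\rm{QCoh}^K(X)}(\F_1, I^k)$, one sees that $\rm{Inv}(R\Gamma_{\F_1}(\F_2))$ is represented by the complex $Hom_{\rm{QCoh}^K(X)}(\F_1, I^\bullet)$, which is precisely $R(\rm{Inv} \circ \Gamma_{\F_1})(\F_2)$. The explicit $Ext$ identity in the second half of the statement then drops out by taking $H^q$ of both sides, using the commutation of $\rm{Inv}$ with cohomology established above.

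The step requiring the most care is the exactness of $\rm{Inv}$ on the Hom-complex: it is not exact on arbitrary (algebraic) $K$-modules unless one knows semisimplicity, and semisimplicity in turn depends crucially on the reductivity of $K$ together with the rationality of the action coming from Lemma \ref{eqsheaf}. This is the single point where the hypothesis on $K$ is used, and it is also the reason the analogous statement fails outside the reductive setting.
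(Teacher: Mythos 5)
Your proposal is correct and follows essentially the same route as the paper's proof: both rest on the Grothendieck composition-of-derived-functors formula together with the observation that, by Lemma \ref{semisimple}, the Hom-modules are semisimple rational $K$-modules so that $\mathrm{Inv}$ is exact on them (equivalently $R\,\mathrm{Inv} = \mathrm{Inv}$). Your write-up merely makes explicit the use of the preceding lemma that $\Fgt$ preserves injectives and the termwise identification $Hom_{\mathrm{QCoh}(X)}(\F_1, I^k)^K \cong Hom_{\mathrm{QCoh}^K(X)}(\F_1, I^k)$, which the paper leaves implicit.
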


\begin{proof}
Recall the Grothendieck formula to derive compatible left-exact functors : $R(F \circ F') \cong RF \circ RF'$. Taking invariant is exact on the subcategory of semisimple $K$-representations, and by lemma \ref{semisimple} $\Gamma(Hom_{\rm{QCoh(X)}}(\F_1, \F_2))$ is semisimple. Hence $R \text{Inv} = \text{Inv}$ and the formula follows. 
\end{proof}

\begin{corollary}\label{cor1}
If $\F$ is a $K$-equivariant sheaf, then there is an isomorphism $$ RHom_{\rm{QCoh^K(X)}}(\O_X, \F) \cong (RHom_{\rm{QCoh(X)}}(\O_X, \F))^K$$
\end{corollary}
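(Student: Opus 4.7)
The corollary is essentially a direct specialization of Corollary \ref{cor3} to the case $\F_1 = \O_X$. My plan is to verify the two small points needed: first, that $\O_X$ is naturally a $K$-equivariant sheaf, and second, that with this choice the functor $\Gamma_{\F_1} = Hom_{\rm{QCoh}(X)}(\O_X, -)$ is the global sections functor, so $RHom$ becomes $R\Gamma$.

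First I would note that $\O_X$ carries a canonical $K$-equivariant structure: the action $m : K \times X \to X$ is a morphism of schemes, so $m^* \O_X \cong \O_{K \times X} \cong pr_2^*\O_X$ tautologically, and this isomorphism satisfies the cocycle condition. Under the identification of Lemma \ref{eqsheaf}, the induced action on $\Gamma(U, \O_X)$ is just the pullback of regular functions along the action of $K$. Consequently $\O_X$ is an object of $\rm{QCoh}^K(X)$ and Corollary \ref{cor3} applies with $\F_1 = \O_X$.

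Next, since $Hom_{\rm{QCoh}(X)}(\O_X, \F) = \Gamma(X, \F)$ naturally in $\F$, and similarly $Hom_{\rm{QCoh}^K(X)}(\O_X, \F) = \Gamma(X, \F)^K$ (a $K$-equivariant morphism $\O_X \to \F$ is determined by a global section invariant under the $K$-action described in Lemma \ref{eqsheaf}), the functor $\rm{Inv} \circ \Gamma_{\O_X}$ coincides with $Hom_{\rm{QCoh}^K(X)}(\O_X, -)$. The statement of the corollary is then nothing but the passage to derived functors in the isomorphism $R(\rm{Inv} \circ \Gamma_{\O_X}) \cong \rm{Inv} \circ R\Gamma_{\O_X}$ supplied by Corollary \ref{cor3}, interpreted as an equality of objects in the derived category of vector spaces rather than just at the level of cohomology groups $Ext^q$.

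There isn't really a hard step here: the content is already packaged in the previous corollary, and the only thing to check is the compatibility of $Hom(\O_X, -)$ with the global sections functor, which is standard. If any subtlety arises, it will be purely notational, namely making sure that ``$RHom$ in $\rm{QCoh}^K(X)$'' is interpreted as the right derived functor of $Hom_{\rm{QCoh}^K(X)}(\O_X, -)$ computed via the injective resolutions provided by the preceding lemma, whose forgetful images remain injective in $\rm{QCoh}(X)$; this is exactly what allows the derived equivalence to descend to the non-equivariant computation followed by taking invariants.
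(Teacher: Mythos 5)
Your proposal is correct and matches the paper's intent: the paper gives no separate proof of this corollary, treating it as the immediate specialization of Corollary \ref{cor3} to $\F_1 = \O_X$, which is exactly what you do. The two points you verify (the canonical equivariant structure on $\O_X$ and the identification $Hom_{\rm{QCoh}(X)}(\O_X,-) = \Gamma(X,-)$) are precisely the routine checks the paper leaves implicit.
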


\begin{lemma}\label{inv}
If $0 \to \F_1 \to \F_2 \to \F_3 \to 0$ is a $K$-equivariant sequence of $K$-equivariant sheaves, and $\F_3$ is locally free, then the corresponding cohomology class in $H^1(X, \F_1 \otimes \F_3^*)$ is $K$-invariant.
\end{lemma}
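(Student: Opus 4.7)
The plan is to identify the extension class as a cohomology class via the standard Yoneda/Ext chain of isomorphisms and track $K$-equivariance through each step, invoking Corollary \ref{cor3} at the key point.

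First I would recall that any short exact sequence $0 \to \F_1 \to \F_2 \to \F_3 \to 0$ in $\rm{QCoh}(X)$ determines a Yoneda extension class $[e] \in \Ext^1_{\rm{QCoh}(X)}(\F_3, \F_1)$, and that the hypothesis of $K$-equivariance for the sequence means precisely that $[e]$ is the image, under the forgetful map, of a class $[e]^K \in \Ext^1_{\rm{QCoh}^K(X)}(\F_3, \F_1)$. Next, since $\F_3$ is locally free (and equivariant), tensoring the exact sequence with the equivariant locally free sheaf $\F_3^*$ yields a $K$-equivariant short exact sequence whose class lies in $\Ext^1(\O_X, \F_1 \otimes \F_3^*) \cong H^1(X, \F_1 \otimes \F_3^*)$, and this identification is compatible with the $K$-action on both sides because tensor product with an equivariant sheaf is a functor on $\rm{QCoh}^K(X)$ that matches the functor on $\rm{QCoh}(X)$ after forgetting.

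Now the key step: by Corollary \ref{cor3} applied to the pair $(\O_X, \F_1 \otimes \F_3^*)$, there is a canonical isomorphism
\[
\Ext^1_{\rm{QCoh}^K(X)}(\O_X, \F_1 \otimes \F_3^*) \cong \bigl(\Ext^1_{\rm{QCoh}(X)}(\O_X, \F_1 \otimes \F_3^*)\bigr)^K = H^1(X, \F_1 \otimes \F_3^*)^K.
\]
Since the class of the twisted sequence lies, by the preceding paragraph, in the left-hand side, its image under the forgetful map lands in the invariants of the right-hand side. Chasing the isomorphism $\Ext^1_{\rm{QCoh}(X)}(\F_3, \F_1) \cong H^1(X, \F_1 \otimes \F_3^*)$ back, one concludes that the original extension class is $K$-invariant.

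I do not expect a serious obstacle here; the content is essentially bookkeeping. The only point that deserves care is verifying that the isomorphism $\Ext^1(\F_3, \F_1) \cong H^1(X, \F_1 \otimes \F_3^*)$, obtained by tensoring with $\F_3^*$, is compatible with the induced $K$-action on Hom/Ext groups described in the corollary preceding Lemma \ref{semisimple}. This reduces to the elementary observation that the action $g \cdot \varphi = g \circ \varphi \circ g^{-1}$ commutes with tensoring morphisms by the identity of an equivariant sheaf, which is straightforward from Lemma \ref{eqsheaf}.
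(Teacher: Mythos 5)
Your argument is correct and follows essentially the same route as the paper: view the equivariant extension as a class in $\Ext^1_{\rm{QCoh}^K(X)}(\F_3,\F_1)$, identify this with $\Ext^1_{\rm{QCoh}^K(X)}(\O_X,\F_1\otimes\F_3^*)$ via the locally free hypothesis, and apply Corollary \ref{cor3} to land in the $K$-invariants of $H^1(X,\F_1\otimes\F_3^*)$. Your added care about the compatibility of the tensoring isomorphism with the $K$-action is a detail the paper leaves implicit, but the substance is identical.
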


\begin{proof}

By definition the short exact sequence gives a map $\F_3 \to \F_1[1]$ in $D^b(\rm{QCoh}^K(X))$. This is the same as an element in $Ext^1_{\rm{QCoh}^K(X)}(\F_3, \F_1)$. Since $ Ext^1_{\rm{QCoh}^K(X)}(\F_3, \F_1) \cong Ext^1_{\rm{QCoh}^K(X)}(\O_X, \F_1 \otimes \F_3^*)$  we can apply the corollary \ref{cor3} and compute the latter as $H^1(R\Gamma(X,\F_1 \otimes \F_3^*)^K)$ which is $K$-invariant.
\end{proof}

Now we specialize to the case where $K = T$ is a complex algebraic torus. 

\begin{lemma}\label{trivial}
Assume that the $T$-action on $X$ is trivial and that $\mathcal E$ is a $T$-equivariant vector bundle on $X$. Then, there is a decomposition into eigenspace $\mathcal E = \oplus_{\lambda} \mathcal E[\lambda]$ where $\lambda : T \to \C^*$ is a character and we have $H^i_{T}(X, \mathcal E) = H^i(X, \mathcal E[0])$.
\end{lemma}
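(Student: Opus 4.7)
The plan is to first establish the weight-space decomposition of $\mathcal{E}$ as an equivariant sheaf, and then deduce the cohomology identity from Corollary \ref{cor3}. Since $T$ acts trivially on $X$, every open $U \subset X$ is $T$-stable, and the action map $m : T \times X \to X$ coincides with $pr_2$. By Lemma \ref{eqsheaf}, the equivariant structure thus endows each $\Gamma(U, \mathcal{E})$ with an algebraic action of $T$, and the restriction maps are $T$-equivariant. Since every algebraic $T$-module decomposes canonically into weight spaces, we obtain $\Gamma(U, \mathcal{E}) = \bigoplus_\lambda \Gamma(U, \mathcal{E})_\lambda$ compatibly with restrictions. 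Setting $\mathcal{E}[\lambda]$ to be the subsheaf $U \mapsto \Gamma(U, \mathcal{E})_\lambda$, the naturality of this decomposition under $T$-equivariant maps yields a $T$-equivariant sheaf decomposition $\mathcal{E} = \bigoplus_\lambda \mathcal{E}[\lambda]$.

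Because $\mathcal{E}$ is a vector bundle, on any $T$-stable affine chart provided by Sumihiro's Theorem \ref{sumihiro} only finitely many characters $\lambda$ appear, and each direct summand $\mathcal{E}[\lambda]$ is a locally free coherent sheaf. Equivalently, one has $\mathcal{E}[\lambda] \cong \mathcal{E}[0] \otimes \chi_\lambda$, where $\chi_\lambda$ denotes $\O_X$ equipped with the $T$-action of pure weight $\lambda$.

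For the cohomology statement, I interpret $H^i_T(X, \mathcal{E})$ as $\Ext^i_{\mathrm{QCoh}^T(X)}(\O_X, \mathcal{E})$. By Corollary \ref{cor3}, this is isomorphic to $(H^i(X, \mathcal{E}))^T$. Cohomology commutes with direct sums, so $H^i(X, \mathcal{E}) = \bigoplus_\lambda H^i(X, \mathcal{E}[\lambda])$ as $T$-modules, and the summand $H^i(X, \mathcal{E}[\lambda]) \cong H^i(X, \mathcal{E}[0]) \otimes \chi_\lambda$ has pure weight $\lambda$. Taking $T$-invariants therefore extracts exactly the $\lambda = 0$ summand, yielding the desired identity $H^i_T(X, \mathcal{E}) = H^i(X, \mathcal{E}[0])$.

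The main obstacle, in my view, lies in promoting the pointwise weight-space decomposition to a decomposition of $\mathcal{E}$ as an equivariant sheaf: one must check that the local decompositions glue consistently. This comes down to the naturality of the weight decomposition under $T$-equivariant linear maps, which applies to the restriction maps of $\mathcal{E}$ because they are $T$-equivariant by Lemma \ref{eqsheaf}. Once this step is in hand, the remainder is formal: an application of Corollary \ref{cor3} together with additivity of sheaf cohomology.
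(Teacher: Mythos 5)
Your argument follows the same route as the paper: decompose $\mathcal E$ into weight subsheaves using the triviality of the action, then identify the equivariant cohomology with the ordinary cohomology of the weight-zero summand (the paper does this by an isomorphism of functors $\Gamma_T(X,-)\cong\Gamma(X,(-)[0])$ and deriving; your reduction via Corollary \ref{cor3} to $T$-invariants of $H^i(X,\mathcal E)$ is the same idea, spelled out slightly more). One statement in your write-up is false, however: it is not true that $\mathcal E[\lambda]\cong\mathcal E[0]\otimes\chi_\lambda$. The weight-$\lambda$ summand is in no way determined by the weight-$0$ summand --- already for $X$ a point and $\mathcal E$ the one-dimensional representation of weight $\lambda\neq 0$ one has $\mathcal E[0]=0$ but $\mathcal E[\lambda]\neq 0$, and in general the summands $\mathcal E[\lambda]$ can have different ranks. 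Fortunately this claim is not needed: since $T$ acts trivially on $X$ and acts on every section of $\mathcal E[\lambda]$ by the scalar $\lambda(t)$, functoriality of cohomology shows directly that $H^i(X,\mathcal E[\lambda])$ is a pure $T$-module of weight $\lambda$, so taking invariants kills all summands with $\lambda\neq 0$. With that sentence deleted (or replaced by the direct purity argument), your proof is correct.
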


\begin{proof}
If $U \subset X$ is affine, then $\mathcal E_{\mid U} \cong E \times U$ for some vector space $E$. The action of $T$ is given by an action on the first component, and we get a decomposition $E \cong \oplus_{\lambda} E[\lambda]$ into $T$-eigenspaces. Hence $\mathcal E_{\mid U} \cong \oplus_{\lambda} \mathcal E_{\mid U} [\lambda]$, and we get a global decomposition $\mathcal E \cong \oplus_{\lambda} \mathcal E[\lambda]$ since the $T$ -action does not depend on $U$. Moreover, there is an isomorphism of functors $\Gamma_T(X,\mathcal E) \cong \Gamma(X, \mathcal E[0])$. It follows that $H^i_{T}(X, \mathcal E) = H^i(X, \mathcal E[0])$ by taking derived functors. 
\end{proof}

Most of these results were used implicitly in \cite{BL}.
 
\subsection{Kontsevitch's theorem}

Let $X$ be a smooth algebraic variety over the complex numbers. The Hochschild cohomology of $X$ is defined as $HH^{\bullet}(X) = \bigoplus_{i}\Ext^i_{\O_{X \times X}}(\O_{\Delta}, \O_{\Delta})$ where $\Delta \subset X \times X$ is the diagonal. It contains information about deformations of $X$, for example $H^1(X, TX)$ is the space parametrizing complex deformations of $X$. Let us recall two important theorems about the Hochschild cohomology of a smooth complex algebraic variety. The first theorem is the \emph{Hochschild-Kostant-Rosenberg isomorphism }

\begin{theorem}(\cite{ginzburg},\label{qis})
Let $\iota : \Delta \to X \times X$ be the diagonal inclusion.  There is a quasi-isomorphism $$\iota^*\O_{\Delta} \cong \bigoplus_{i \in \mathbb N} \Omega_X^i[i] $$ 
in the derived category $D^b(\text{Coh}(X))$.
\end{theorem}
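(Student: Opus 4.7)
The statement is local on $X$, so the first move is to reduce to the case where $X = \Spec A$ is affine with $A$ a smooth $\C$-algebra. Under this reduction, the derived pullback $\iota^* \O_\Delta$ is computed by the derived tensor product $A \otimes^L_{A \otimes A} A$, so the claim becomes an isomorphism in the derived category of $A$-modules between this object and $\bigoplus_p \Omega^p_A[p]$ viewed as a complex with zero differential.

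For the model case $A = \C[x_1, \ldots, x_n]$, the sequence $(x_i \otimes 1 - 1 \otimes x_i)_{i=1}^n$ is a regular sequence in $A \otimes A$ generating the kernel of the multiplication map, so the Koszul complex on these elements is a free resolution of $A$ as an $A \otimes A$-module. Applying $A \otimes_{A \otimes A} -$ kills every differential, because each generator restricts to zero on $\Delta$, and what remains is the complex $\bigoplus_p \wedge^p(A^n)[p] = \bigoplus_p \Omega^p_A[p]$ with zero differential. This yields the desired quasi-isomorphism in the polynomial case.

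For general smooth $A$ I would proceed either via \'etale descent --- choosing \'etale coordinates from a polynomial ring into $A$ and transferring the Koszul-based identification along the base change, since both $\iota^*\O_\Delta$ and $\bigoplus_p \Omega^p_X[p]$ respect \'etale localization --- or intrinsically, by using the bar resolution and showing that the antisymmetrization map
\[
a_0 \otimes a_1 \otimes \cdots \otimes a_p \;\mapsto\; \frac{1}{p!}\, a_0\, da_1 \wedge \cdots \wedge da_p
\]
is a quasi-isomorphism from the Hochschild chain complex onto $\bigoplus_p \Omega^p_A[p]$. To globalize, one observes that the target $\bigoplus_p \Omega^p_X[p]$ is an intrinsic object on $X$ while the local quasi-isomorphisms are canonical enough in the derived category to patch, producing an isomorphism in $D^b(\text{Coh}(X))$.

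The main obstacle is precisely the smooth affine case: establishing that the antisymmetrization map is a quasi-isomorphism for an arbitrary smooth $\C$-algebra, not merely for the polynomial model. Everything else --- reduction to affines, the Koszul calculation for $\C[x_1,\ldots,x_n]$, and the gluing step --- is either formal or a direct computation. Once the smooth affine HKR is in hand, the theorem follows immediately.
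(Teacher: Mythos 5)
Your plan follows the same route as the paper: reduce to the affine case $X=\Spec A$ (the paper defers the non-trivial sheaf-theoretic globalization to Yekutieli and Ginzburg), then use the bar resolution of $\O_\Delta$ together with the antisymmetrization map onto $\bigoplus_p \Omega^p_A[p]$, citing the classical Hochschild--Kostant--Rosenberg theorem for why this is a quasi-isomorphism for smooth affine algebras. The extra material you include (the Koszul computation for the polynomial model and the \'etale-descent alternative) is consistent with and fills in the step the paper simply attributes to \cite{hochschild}.
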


Here $\Omega_X^i$ is the sheaf of differential forms on $X$ of degree $i$, and $[i]$ is the cohomological shift functor. \\ 

This is a stronger version of the HKR theorem (that appeared first in \cite{hochschild}) stating that there is an isomorphism of graded vector spaces $$I_{HKR} :  HH^{\bullet}(X) \to HT^{\bullet}(X) := \bigoplus_{i + j = \bullet} H^i(X, \wedge^jTX))$$

Let us recall the proof of theorem \ref{qis} : a non-trivial argument using sheaves (see \cite{yekutieli}, \cite{ginzburg}) reduces it to the statement when $X = \Spec(R)$ is affine. This is the case treated in \cite{hochschild}. We use the \emph{bar complex} $\mathscr{C}_{\bullet}(R)$, which is a flat resolution of $\O_{\Delta}$ by $\O_{X \times X}$-modules. Recall that the bar resolution is given by $\mathscr{B}_i(R)  := R^{\otimes (i+2)} $. We define $\mathscr{C}_i(R) = R \otimes_{R \times R} \mathscr{B}_i(R)$. The differential on $\mathscr{B}_{\bullet}(R)$ is given by $$d(1 \otimes a_1 \otimes \dots \otimes a_i \otimes 1) = a_1 \otimes a_2 \dots \otimes a_i \otimes 1 - 1 \otimes a_1a_2 \otimes a_3 \otimes \dots \otimes a_i \otimes 1 + \dots + (-1)^{i+1}1 \otimes a_0 \otimes \dots \otimes a_i$$ A quasi-isomorphism  $I : \mathscr{C}_{\bullet}(R) \to (\bigwedge^{\bullet}(\Omega_R^1[1]),0)$ is given by $$1 \otimes (1 \otimes a_1 \otimes \dots \otimes a_i \otimes 1) \mapsto da_1 \wedge da_2 \dots \wedge da_i$$

Since the bar complex resolves $R$ as $R$-bimodule, this bar complex is isomorphic to $\O_{\Delta}$
in $D^b(X \times X)$. This finishes the proof.

In general, $I_{HKR}$ is not multiplicative. To understand how the multiplications are related, we need to introduce the Todd class. We use the formalism of Atiyah class explained in section $1$ of \cite{kapranov}.

There is an exact sequence $$ 0 \to \Omega_X \otimes TX \to J^1(TX) \to TX \to 0 $$
where $J^1(TX)$ is the bundle of first-order jet of $TX$. Such sequence gives a class in $Ext^1(TX, \Omega_X \otimes TX) = H^1(X, \Omega_X \otimes End(TX))$. 

\begin{definition}
The \emph{Atiyah class} of $X$ is defined as the class of this extension : $At(X) \in H^1(X, \Omega_X \otimes \text{End}(TX))$.
\end{definition}

\begin{definition}
The \emph{Todd class} of $X$ is defined as \[ Td(X) = \det \left( \frac{At(X)}{1 - e^{-At(X)}} \right) \]
\end{definition}

A more familiar definition of the Todd class involves Chern classes of $TX$. Our definition is equivalent to that, according to the following statement (see \cite{kapranov}, formula $1.4.1$) :

\begin{proposition}
We have $tr(\wedge^i At(X)) = c_i(X)$ where $c_i(X)$ is the $i$-th Chern class of the tangent bundle $TX$.
\end{proposition}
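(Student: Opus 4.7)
The plan is to reduce the identity to the case of line bundles via the splitting principle, exploiting the compatibility of the Atiyah class with short exact sequences. First I would verify that $At$ is additive on direct sums and, more generally, that for a short exact sequence $0 \to E' \to E \to E'' \to 0$ the class $At(E)$ is block upper-triangular with diagonal entries $At(E')$ and $At(E'')$; consequently the trace of any polynomial in $At$ depends only on the associated graded bundle. This is proved by a direct Cech-cocycle computation comparing the first-jet extensions of $E$, $E'$ and $E''$.

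Next I would invoke the splitting principle: pulling back along the complete flag bundle $\pi : \mathrm{Fl}(TX) \to X$, the pullback $\pi^*TX$ admits a filtration with line-bundle quotients $L_1, \ldots, L_n$, and $\pi^*$ is injective on cohomology by Leray-Hirsch. Combined with the previous step, it suffices to check the equality $\mathrm{tr}(\wedge^i At(E)) = c_i(E)$ when $E = L_1 \oplus \cdots \oplus L_n$ is a sum of line bundles. In this split situation $At(E)$ is literally diagonal, so $\mathrm{tr}(\wedge^i At(E))$ is the $i$-th elementary symmetric polynomial $e_i(At(L_1), \ldots, At(L_n))$, which matches the splitting-principle definition of $c_i(E)$ provided one knows $At(L_j) = c_1(L_j)$ in $H^1(X, \Omega_X)$.

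The remaining content is therefore the line-bundle identity $At(L) = c_1(L)$. Here the first-jet sequence $0 \to \Omega_X \otimes L \to J^1(L) \to L \to 0$ has extension class equal, up to sign, to $c_1(L) \in H^1(X, \Omega_X) \cong H^{1,1}(X)$: this can be checked by writing both classes as Cech $1$-cocycles in terms of trivializations $(U_\alpha, s_\alpha)$ and transition functions $g_{\alpha\beta}$ of $L$, in which case both cocycles are represented by $d \log g_{\alpha\beta}$.

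The only genuine subtlety I anticipate is the normalization: algebraically one defines $At$ and $c_1$ up to conventions about signs and factors of $2\pi i$, and verifying that the Todd-class formula $\det(At/(1-e^{-At}))$ reproduces the classical $\prod x_j/(1-e^{-x_j})$ in Chern roots requires tracking those choices. This is bookkeeping rather than a conceptual obstacle, and a complete verification is carried out in Kapranov's paper cited above.
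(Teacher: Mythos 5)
The paper offers no proof of this proposition at all: it simply quotes the identity from Kapranov's paper (formula $1.4.1$ of \cite{kapranov}), so any argument you give is necessarily ``a different route.'' Your sketch is the standard and correct one. The three ingredients --- (i) compatibility of $At$ with short exact sequences, so that $\mathrm{tr}$ of any polynomial in $At(E)$ depends only on the associated graded; (ii) the splitting principle on the flag bundle $\pi:\mathrm{Fl}(TX)\to X$; (iii) the line-bundle computation identifying the jet-sequence extension class with the \v{C}ech cocycle $d\log g_{\alpha\beta}$ representing $c_1(L)\in H^1(X,\Omega_X)$ --- assemble exactly as you describe, and for a direct sum of line bundles $\mathrm{tr}(\wedge^i At)$ is indeed the elementary symmetric polynomial $e_i(At(L_1),\dots,At(L_n))$. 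Two points deserve to be made explicit if you write this up in full: first, the injectivity of $\pi^*$ must be checked in Hodge cohomology $\bigoplus_i H^i(X,\Omega_X^i)$, where the Chern classes and the traces of $\wedge^i At$ actually live (the projective-bundle/Leray--Hirsch formula does hold there, but it is not the topological statement verbatim); second, you need functoriality $At(\pi^*E)=\pi^*At(E)$ to transport the class up to the flag bundle, which follows from the base-change behaviour of the first-jet sequence. Your closing remark about normalizations is apt --- the sign conventions are exactly what make $\det(At/(1-e^{-At}))$ agree with $\prod x_j/(1-e^{-x_j})$ --- and deferring that bookkeeping to \cite{kapranov} is reasonable, since that is all the paper itself does.
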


Now we can state Kontsevitch's theorem :

\begin{theorem}\cite{kontsevitch}\cite{calaque}
Let $X$ be as before and $t$ be the Todd class of $X$. Then, the composition $$ \langle t^{-1/2},- \rangle \circ  I_{HKR}  : HH^{\bullet}(X) \to HT^{\bullet}(X)$$ is a ring isomorphism.
\end{theorem}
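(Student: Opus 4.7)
The plan is to reduce the statement to Kontsevich's formality theorem and to identify the resulting twist. Since Theorem \ref{qis} already supplies the underlying vector space isomorphism $I_{HKR}$, the content of the statement lies entirely in the compatibility with multiplication, namely Yoneda composition of extensions on the Hochschild side versus the wedge product of polyvector fields on the $HT$ side.

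First, I would work locally. Using the sheaf-theoretic reduction of \cite{yekutieli} and \cite{ginzburg} already invoked in the proof of Theorem \ref{qis}, together with the fact that both products are compatible with restriction to affine opens, it suffices to prove the affine statement for $X = \Spec R$ with $R$ a smooth finitely-generated $\C$-algebra. In this setting Kontsevich's formality theorem provides an $L_\infty$-quasi-isomorphism $U : T_{poly}^{\bullet}(R) \to C^{\bullet}(R,R)$ whose linear Taylor coefficient $U_1$ is exactly the HKR map. The induced map on cohomology intertwines the wedge product with the cup product, but only after precomposing with a non-trivial automorphism of $T_{poly}^{\bullet}(R)$ determined by the higher Taylor coefficients $U_n$ for $n \geq 2$.

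Second, I would identify this automorphism. A direct graph-combinatorial computation, extracted from the ``wheel'' part of $U$, shows that it acts on $HT^{\bullet}(X)$ as the contraction $\langle t^{1/2}, - \rangle$, where the formal series whose coefficients are the wheel integrals matches term by term the Bernoulli-type series $x/(1-e^{-x})$ defining the Todd genus via the Atiyah class and the formalism of \cite{kapranov}. Consequently, composing $I_{HKR}$ with $\langle t^{-1/2}, - \rangle$ cancels this twist and yields an algebra isomorphism. Globalizing then follows \cite{calaque}: the affine construction is made equivariant on the formal disk and descends along the bundle of formal coordinate systems to the whole smooth variety $X$ via a Fedosov connection, with the role of the local Atiyah class played globally by $At(X)$.

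The main obstacle is the second step: the evaluation of Kontsevich's wheel integrals as the precise coefficients of the Todd power series. This is exactly the content of the Kontsevich-Duflo theorem, and in practice one relies on Kontsevich's original integral computation or on the purely algebraic reformulations appearing in the literature to complete the identification.
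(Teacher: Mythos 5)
First, a point of order: the paper does not prove this theorem. It is imported wholesale from \cite{kontsevitch} and \cite{calaque} and used as a black box, so the comparison here is between your sketch and the literature proofs rather than an argument in the text.

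Your overall architecture --- formality, identification of the wheel contributions with the Todd series, globalization --- is the standard one, and your last paragraph correctly isolates the hard input (the Kontsevich--Duflo evaluation of the wheel integrals). But your first step, as written, would fail. You claim that compatibility of both products with restriction to affine opens reduces the multiplicative statement to $X = \Spec R$. On a smooth affine variety $H^i(X,\Omega^i_X)=0$ for $i>0$, so the Todd class reduces to $1$, the twist $\langle t^{-1/2}, - \rangle$ is the identity, and the untwisted $I_{HKR}$ is already a ring isomorphism on cohomology (the classical Hochschild--Kostant--Rosenberg statement for smooth algebras). The affine case therefore carries no information about the twist: the content of the theorem is irreducibly global, and the Todd correction measures precisely the failure of the local multiplicative homotopies to glue --- equivalently, it is governed by the Atiyah class, which is the obstruction to a global algebraic connection on $TX$ and vanishes affine-locally. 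This is also why your first and third paragraphs are in tension: if the affine reduction were valid, the Fedosov/formal-geometry globalization would be unnecessary and no twist would appear. The repair is either to globalize the formality morphism itself (not merely the statement) over the bundle of formal coordinate systems, or --- closer to the cited reference \cite{calaque} --- to bypass the transcendental wheel integrals entirely and argue algebraically with the Atiyah class in the formalism of \cite{kapranov}, which is how Calaque and Van den Bergh actually establish the version quoted here.
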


\subsection{Equivariance of the twisted HKR isomorphism}

We now assume that $T$ is a complex torus acting on a smooth variety $X$.

\begin{lemma}\label{lemme1} If $X$ is affine, the isomorphism $I_{HKR}$ is $T$-equivariant.
\end{lemma}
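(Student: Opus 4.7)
The plan is to establish $T$-equivariance at the level of the quasi-isomorphism $I$ from theorem \ref{qis}, from which $T$-equivariance of $I_{HKR}$ follows by applying $\Hom_R(-,R)$ and passing to cohomology.

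Writing $X = \Spec(R)$, the action of the reductive group $T$ on the affine variety $X$ gives $R$ the structure of a $\Lambda(T)$-graded $\C$-algebra: there is a weight space decomposition $R = \bigoplus_\lambda R_\lambda$, and multiplication preserves weights. I would then transport this grading to each side of $I$. The bar resolution $\mathscr{B}_i(R) = R^{\otimes(i+2)}$ carries the tensor-product grading, and hence so does $\mathscr{C}_i(R) = R \otimes_{R \otimes R} \mathscr{B}_i(R)$; because the bar differential is built from the multiplication in $R$, it is $T$-equivariant. On the Kähler forms side, the universal derivation $d : R \to \Omega_R^1$ is $T$-equivariant, since $T$ acts by algebra automorphisms: for $a \in R_\lambda$ one has $g \cdot da = d(g \cdot a) = \lambda(g) \, da$. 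This gives each $\Omega_R^i$ a weight decomposition and makes $(\bigwedge^\bullet (\Omega_R^1[1]), 0)$ into a complex of equivariant $R$-modules.

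The heart of the argument is then a direct inspection of Kontsevich's formula
\[ 1 \otimes (1 \otimes a_1 \otimes \dots \otimes a_i \otimes 1) \mapsto da_1 \wedge \dots \wedge da_i. \]
If each $a_j$ lies in $R_{\lambda_j}$ then the left-hand side has weight $\sum_j \lambda_j$, and, because $da_j$ lies in the weight-$\lambda_j$ component of $\Omega_R^1$, so does the right-hand side. Extending by $\C$-linearity, $I$ preserves the weight grading, i.e. $I$ is $T$-equivariant.

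Finally, to obtain equivariance of $I_{HKR}$, I would apply the functor $\Hom_R(-,R)$, which is exact and $T$-equivariant on complexes of equivariant $R$-modules. By adjunction $\Hom_R(\mathscr{C}_\bullet(R),R) \cong \Hom_{R \otimes R}(\mathscr{B}_\bullet(R),R)$ computes $HH^\bullet(X)$, while $\Hom_R(\bigoplus_i \Omega_R^i[i], R) = \bigoplus_i \wedge^i T_R$ computes $HT^\bullet(X)$ (the $\Omega_R^i$ being projective thanks to smoothness). Since $I$ is a $T$-equivariant quasi-isomorphism, the induced map on cohomology is $T$-equivariant. I do not anticipate a substantive obstacle: the whole argument amounts to bookkeeping of the weight grading, every ingredient of the construction (multiplication, the bar differential, the derivation $d$) being $T$-equivariant by design.
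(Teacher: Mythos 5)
Your proof is correct and follows essentially the same route as the paper: both arguments reduce to observing that the bar complex is a $T$-equivariant flat resolution (since its differential is built from the multiplication of $R$) and that the universal derivation $a \mapsto da$ is $T$-equivariant, so that the explicit quasi-isomorphism $I$ respects the action. Your weight-grading bookkeeping and the final passage through $\Hom_R(-,R)$ merely make explicit what the paper leaves implicit.
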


\begin{proof} 
Say $X = \Spec(R)$. Let us keep notation from the discussion after \ref{qis}. The differential of the bar-complex is $T$-equivariant, hence the bar-complex gives a flat $T$-equivariant resolution of $\O_{\Delta}$. Moreover the map $I : A \to \Omega_R, a \mapsto da$ is $T$-equivariant, where $\Omega_R$ is the module of Kähler differential. This is because there is a natural identification $\Omega_R \cong J/J^2, da \mapsto a \otimes 1 - 1 \otimes a$, where $J$ is the kernel of the multiplication map $R \otimes R \to R$.  Under this identification, the map $a \mapsto da$ is $T$-equivariant. It follows that the quasi-isomorphism $I : \mathscr{C}_{\bullet}(R) \to (\bigwedge^{\bullet}(\Omega_R^1[1]),0)$ is also $T$-equivariant. 
\end{proof}

\begin{lemma}\label{lemme2}
If $X$ is a quasi-projective smooth variety acted upon by $T$ then the previous lemma holds. 
\end{lemma}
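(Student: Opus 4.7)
The plan is to reduce the statement to the affine case treated in lemma \ref{lemme1} using Sumihiro's theorem. Since $X$ is quasi-projective (hence separated) with a $T$-action, theorem \ref{sumihiro} gives a $T$-invariant affine open cover $\mathfrak{U} = \{U_\alpha\}$ of $X$; because $X$ is separated, arbitrary finite intersections $U_{\alpha_0 \cdots \alpha_p} := U_{\alpha_0} \cap \cdots \cap U_{\alpha_p}$ are again affine and $T$-stable. On each such $U$, lemma \ref{lemme1} produces a $T$-equivariant quasi-isomorphism $I_U : \iota^* \O_{\Delta_U} \to \bigoplus_i \Omega_U^i[i]$ in $D^b(\text{Coh}(U))$.

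The second step is to invoke the standard globalization of HKR (as in \cite{yekutieli}, \cite{ginzburg}) to build the map $I_X$ on all of $X$. The key point is that this construction is natural under restriction to open subsets: for every inclusion of opens $j : V \hookrightarrow X$, the pullback $j^*I_X$ agrees with the HKR morphism built on $V$. Applied to our cover, this means that the restriction of $I_X$ to each $T$-invariant affine $U_\alpha$ coincides with the affine map $I_{U_\alpha}$ of lemma \ref{lemme1}, which is $T$-equivariant.

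Finally, $T$-equivariance of a morphism between $T$-equivariant sheaves can be checked Zariski-locally, so the restrictions being equivariant imply that $I_X$ is $T$-equivariant as a morphism in $D^b(\text{Coh}^T(X))$. Equivalently, using corollary \ref{cor3}, the class of $I_X$ in $\Hom_{D^b(\text{Coh}(X))}(\iota^*\O_\Delta, \bigoplus_i \Omega_X^i[i])$ is $T$-invariant (this being verified on $\mathfrak{U}$) and therefore lifts canonically to the equivariant derived category; passing to $\Ext$-groups then shows that the induced map $I_{HKR} : HH^{\bullet}(X) \to HT^{\bullet}(X)$ is $T$-equivariant.

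The main obstacle is the globalization step, because the bar resolution used in lemma \ref{lemme1} does not sheafify directly: it is built from tensor products over $\C$ rather than over $\O_X$. One must either invoke the refined resolution of Yekutieli (showing the derived-category morphism is globally well-defined and compatible with restriction) or argue via a Čech double complex associated to the cover $\mathfrak{U} \times \mathfrak{U}$ of $X \times X$, whose differentials are $T$-equivariant precisely because $\mathfrak{U}$ is $T$-invariant; the affine HKR maps on the Čech pieces then give a $T$-equivariant morphism of total complexes computing both sides.
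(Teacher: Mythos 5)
Your proposal is correct and follows essentially the same route as the paper: Sumihiro's theorem supplies a $T$-invariant affine cover, equivariance of the globalized HKR quasi-isomorphism is checked locally on that cover, and the affine case is Lemma \ref{lemme1}. You are in fact more careful than the paper's own (rather terse) proof, since you explicitly flag the globalization subtlety --- that the bar resolution does not sheafify directly and one must use Yekutieli's completed resolution or a \v{C}ech-type argument compatible with the invariant cover --- which the paper passes over with ``we could glue together the bar complex.''
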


\begin{proof} 
By Sumihiro's theorem, there is an $T$-invariant affine open cover. So we can check the statement on each affine open set $U \subset X$ that is $T$-invariant. But recall that in the discussion sketching the proof of theorem \ref{qis}, we saw that we could glue together the bar complex to construct a global quasi-isomorphism $I_{HKR}$. So it means that the invariance can be only checked for the bar complex $\mathscr C_{\bullet}(R)$, that was precisely given by our previous lemma. Hence $I_{HKR}$ is $T$-equivariant. 
\end{proof}

\begin{lemma}\label{lemme3}
If $X$ is as before, then the Atiyah class of $X$ is $T$-invariant. Therefore the Todd class of $X$ is also $T$-invariant.
\end{lemma}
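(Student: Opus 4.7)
My plan is to exhibit the Atiyah extension
\[ 0 \to \Omega_X \otimes TX \to J^1(TX) \to TX \to 0 \]
as a short exact sequence in $\text{QCoh}^T(X)$; Lemma \ref{inv} then immediately yields the $T$-invariance of $At(X)$, after which the Todd class will be invariant as a universal expression in cup-powers of $At(X)$ composed with the trace on $\text{End}(TX)$.

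The sheaves $TX$ and $\Omega_X$ carry canonical $T$-equivariant structures, because $T$ acts on $X$ by algebraic automorphisms whose differentials supply the isomorphisms $m^*TX \cong pr_2^*TX$ (and dually) required by Lemma \ref{eqsheaf}. For $J^1(TX)$, I would use its standard realisation via the diagonal: $J^1(TX) = pr_{1,*}(\O_{X\times X}/I_\Delta^2 \otimes pr_2^* TX)$, where $I_\Delta$ is the ideal sheaf of $\Delta \subset X \times X$. The diagonal $T$-action on $X \times X$ preserves $\Delta$, hence $I_\Delta$, so $\O_{X\times X}/I_\Delta^2$ is naturally $T$-equivariant; since pullback, tensor product and pushforward all preserve the equivariant category, $J^1(TX)$ inherits a canonical equivariant structure. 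The two arrows in the sequence come from the identifications $I_\Delta/I_\Delta^2 \cong \Omega_X$ and $\O_{X\times X}/I_\Delta^2 \twoheadrightarrow \O_{X\times X}/I_\Delta \cong \O_X$, both manifestly $T$-equivariant.

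Once the sequence is equivariant and $TX$ is locally free, Lemma \ref{inv} shows that $At(X) \in H^1(X, \Omega_X \otimes \text{End}(TX))^T$. For the Todd class, recall
\[ Td(X) = \det\!\left( \frac{At(X)}{1 - e^{-At(X)}} \right), \]
which expresses $Td(X)$ as a universal polynomial in cup-powers of $At(X)$ followed by the trace on $\text{End}(TX)$. Cup product is $T$-equivariant on each piece $H^p(X, \Omega_X^p \otimes \text{End}(TX)^{\otimes p})$ and the trace is a $T$-equivariant morphism of sheaves, so $Td(X)$ also lies in the $T$-invariants.

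The only real obstacle is the equivariant construction of $J^1(TX)$ together with the verification that the Atiyah sequence lives in $\text{QCoh}^T(X)$. By Sumihiro's theorem \ref{sumihiro} this reduces to $T$-invariant affine charts $U = \Spec(R)$, where the first principal parts module is the explicit extension of $R$ by the Kähler differentials $\Omega_{R/\C}$ with its transparent $T$-action, and no serious difficulty is expected.
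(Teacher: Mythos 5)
Your proof is correct and takes essentially the same route as the paper: the paper's own proof is the one-line observation that the extension defining the Atiyah class is $T$-equivariant, so Lemma \ref{inv} applies, and that the Todd class is then automatically invariant as a universal expression in $At(X)$. Your proposal merely supplies the details the paper leaves implicit (the equivariant structure on $J^1(TX)$ via the diagonal ideal, and the $T$-equivariance of cup product and trace), so there is nothing to change.
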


\begin{proof}
This follows from lemma \ref{inv} because the exact sequence defining the Atiyah class is $T$-equivariant.
\end{proof}

It follows that the twisted HKR morphism $HH^{\bullet}(X) \to HT^{\bullet}(X)$ is $T$-equivariant, which completes the proof of our theorem \ref{thm1}. It would be interesting to generalize theorem \ref{thm1} and relate it with the geometry of $X/G$, where $G$ is a complex reductive group. However it is known that even when $G$ is a finite group, the analogue of Kontsevitch's theorem does not hold for the stack $[X/G]$, see \cite{NS}. Without Sumihiro's theorem,(i.e without the existence of an invariant cover) it seems non-trivial to prove that $I_{HKR}$ is $G$-equivariant.

\section{Applications to the small quantum group}

We recall notations and results from \cite{BL} and prove our main result, theorem \ref{thm2}. As an application, we show that the multiplication on the subalgebra generated by the Harish-Chandra center and the Poisson bi-vector field $\tau$ is untwisted.

\subsection{The small quantum group}

We summarize results from \cite{BL}. Let $G$ be a semisimple Lie group of adjoint type, $H \subset B$ a Cartan subgroup contained in a Borel subgroup and $N$ the unipotent radical of $B$. We write $\g, \b, \n$ for the corresponding Lie algebras. If $q$ is a primitive $\ell$-th root of unity where $\ell$ is odd, greater that the Coxeter number of $\g$, prime to the index of connection, and prime to $3$ if there is a component of type $G_2$. To this data Lusztig associated a canonical finite-dimensional Hopf algebra $\u_q(\g)$, called the \emph{small quantum group}, see e.g \cite{lusztig}. An important problem in representation theory is to understand its center. Since $\u_q(\g)$ splits as sum of two-sided ideals, \emph{blocks}. They are parametrized by the orbit of a certain extension of the Weyl group in the weight lattice. So it is enough to understand structure of the center for each block. The regular block that contains the trivial representation is the \emph{principal block} (corresponding to $\lambda = 0$), denoted by $\u_0(\g)$ or $\u_0$ for simplicity. 

A major advance was obtained in \cite{BL} where the center was described geometrically via the Springer resolution. Before stating the main result, let us introduce some notations : $z$ is the trivial line bundle on $\NN$ with the $\C^*$-action given by $(z,v) \mapsto z^2v$, $R$ is the principal block, viewed as bimodule over $\u$. By definition $H^i(\NN, (\wedge^jT\NN)^k) := H^i(G/B, (pr_* \wedge^jT \NN)^k)$.  By definition $HH^{\bullet}(\u_0)$ is the graded ring $$\bigoplus_{m} \Hom_{D^b(\u_0 \otimes \u_0^{op})}(R,R[m])$$ where the multiplication is given by the composition. Moreover, $HH^*_{\C^*}(\NN)$ is by definition the ring $$\bigoplus_{j+k=m} \Hom_{D^b(\mathrm{Coh}^{\C^*}(\NN \times \NN))} (\O_{\Delta}, \O_{\Delta}[j] \otimes z^k)$$ where the ring structure comes from the Yoneda product. The main theorem in \cite{BL} can be formulated :

\begin{theorem}\label{thmBL}
There is a ring isomorphism $ HH^{\bullet}(\u_0) \cong HH^*_{\C^*}(\NN) $.
\end{theorem}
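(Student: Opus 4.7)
The plan is to deduce Theorem~\ref{thmBL} from the Arkhipov-Bezrukavnikov-Ginzburg (ABG) derived equivalence, combined with the fact that Hochschild cohomology is intrinsic to the derived module category. First I would invoke the ABG equivalence
\[ \Phi : D^b(\u_0\text{-mod}^{grd}) \xrightarrow{\sim} D^b(\text{Coh}^{\C^*}(\NN)), \]
where the internal grading on the principal block on the left matches the $\C^*$-equivariance coming from scaling of the cotangent fibers on the right. This is the essential geometric input, so that the problem is reduced to extracting Hochschild cohomology from an abstract equivalence of triangulated categories.

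Second, I would observe that Hochschild cohomology is a derived Morita invariant. More precisely, $HH^{\bullet}(\u_0) = \Ext^{\bullet}_{\u_0 \otimes \u_0^{op}}(R, R)$ is canonically identified with the graded endomorphism ring of the identity endofunctor of $D^b(\u_0\text{-mod}^{grd})$, and likewise $HH^{\bullet}_{\C^*}(\NN) = \Ext^{\bullet}(\O_{\Delta}, \O_{\Delta})$ computes the $\C^*$-graded derived center of $D^b(\text{Coh}^{\C^*}(\NN))$. Since $\Phi$ is a dg-enhanced equivalence, it intertwines these derived centers as graded rings: composition of natural transformations on one side corresponds precisely to the Yoneda product on the other. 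Combining this with the first step and matching the $\C^*$-grading (i.e.\ checking that the twist by $z^k$ on the geometric side corresponds to the degree $k$ internal piece on the algebraic side) yields the desired ring isomorphism.

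The main obstacle is twofold. The existence of the ABG equivalence itself is a deep theorem, relying on significant representation-theoretic and geometric input, and I would treat it as a black box. Passing from the categorical equivalence to a ring isomorphism on Hochschild cohomology is more formal, but requires the equivalence to come from a tilting generator (or a dg-bimodule), so that the identification of derived centers is automatic rather than just an identification of ungraded vector spaces. In the ABG setting this holds because the equivalence is constructed via an exotic tilting module, which guarantees compatibility with bimodule structures and with the relevant $\C^*$-gradings.
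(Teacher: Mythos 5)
This theorem is not proved in the paper at all: it is quoted verbatim from Bezrukavnikov--Lachowska \cite{BL} and used as a black box (the text introduces it with ``The main theorem in \cite{BL} can be formulated''). So there is no internal proof to compare against; what you have written is an attempted reconstruction of the proof in the cited source. Your outline does capture the broad strategy of \cite{BL} -- reduce to the Arkhipov--Bezrukavnikov--Ginzburg derived equivalence and then transport Hochschild cohomology, viewed as the graded endomorphisms of the identity functor of a dg-enhanced category, across that equivalence.

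Taken as a self-contained argument, however, two points are genuine gaps rather than routine checks. First, the ABG equivalence in its standard form identifies the derived category of the \emph{graded} principal block (equivalently, it involves $G\times\C^*$- or $B\times\C^*$-equivariant coherent sheaves on $\NN$), not $D^b(\u_0\text{-mod})$ for the ungraded block algebra whose Hochschild cohomology appears on the left of the statement. The passage from the graded picture to $HH^{\bullet}(\u_0)$ -- concretely, the fact that summing over all equivariant twists $\O_{\Delta}[j]\otimes z^k$ on the geometric side recovers the Hochschild cohomology of the ungraded algebra, and that the residual equivariance that survives is exactly $\C^*$ and not $G\times\C^*$ -- is precisely the nontrivial bookkeeping that \cite{BL} carries out, and your sketch compresses it into ``matching the $\C^*$-grading.'' Second, the assertion that a triangulated equivalence induces a \emph{ring} isomorphism on Hochschild cohomology is false without an enhancement or a bimodule-level (Fourier--Mukai/tilting) presentation of the equivalence; you acknowledge this, but the verification that the ABG equivalence has the required bimodule form and that the Yoneda products match is part of the content of \cite{BL}, not a formality. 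None of this affects the present paper, which only needs the statement as input, but your proposal should be read as a plan for re-deriving \cite{BL} rather than as a proof.
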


We can now give a proof of the theorem \ref{thm2} :

\begin{theorem}
The composition \[ HH^{\bullet}(\mathfrak{u}_0(\mathfrak{g})) \overset{BL}{\to} \bigoplus_{i+j+k= \bullet} H^i(\NN, \wedge^j T \NN)^k \overset{\langle -, Todd(\NN)^{-1/2} \rangle }{\to} \bigoplus_{i+j+k= \bullet} H^i(\NN, \wedge^j T \NN)^k  \] is a ring isomorphism.
\end{theorem}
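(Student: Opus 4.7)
The plan is to factor the claimed composition through the equivariant Hochschild cohomology $HH^{*}_{\C^*}(\NN)$. By Theorem \ref{thmBL}, the Bezrukavnikov–Lachowska map is already a ring isomorphism $BL : HH^{\bullet}(\u_0) \xrightarrow{\sim} HH^{*}_{\C^*}(\NN)$, so it suffices to produce a second ring isomorphism $HH^{*}_{\C^*}(\NN) \xrightarrow{\sim} \bigoplus_{i+j+k=\bullet} H^i(\NN, \wedge^j T\NN)^k$ and to verify that the total composition is the one displayed in the statement.

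To construct this second isomorphism I apply Theorem \ref{thm1} together with Proposition \ref{prop1} to $X = \NN$ equipped with the $\C^*$-action by dilation on the fibers of $T^*(G/B)$. The Springer resolution is quasi-projective, so Sumihiro's Theorem \ref{sumihiro} yields a $\C^*$-invariant affine cover and the hypotheses of Lemma \ref{lemme2} are satisfied. Consequently the twisted HKR morphism $\langle -, Td(\NN)^{-1/2} \rangle \circ I_{HKR} : HH^{\bullet}(\NN) \to HT^{\bullet}(\NN)$ is simultaneously multiplicative (Kontsevich) and $\C^*$-equivariant (Theorem \ref{thm1}; the Todd class is $\C^*$-invariant by Lemma \ref{lemme3}, so twisting by $Td(\NN)^{-1/2}$ preserves equivariance). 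Using Corollary \ref{cor1} and the weight decomposition provided by Lemma \ref{trivial}, this equivariant multiplicative isomorphism descends to the graded/equivariant pieces: the summand of $HH^{*}_{\C^*}(\NN)$ indexed by the twist $\otimes z^k$ is identified with the $\C^*$-weight $k$ part, which on the other side is precisely $\bigoplus_{i+j=\bullet-k} H^i(\NN, \wedge^j T\NN)^k$.

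Composing $BL$ with this second ring isomorphism yields the map of the theorem: the BL identification followed by (untwisted) HKR on the equivariant side gives the first arrow $HH^{\bullet}(\u_0) \to \bigoplus H^i(\NN, \wedge^j T\NN)^k$, and the cup product with $Td(\NN)^{-1/2}$ installs the multiplicative structure. The main obstacle is bookkeeping rather than substance: one must reconcile the weight-$2$ normalization of the line bundle $z$ in the definition of $HH^{*}_{\C^*}(\NN)$ with the $\C^*$-weight grading of $H^i(\NN, \wedge^j T\NN)^k$, and verify that the BL ring isomorphism of Theorem \ref{thmBL} is compatible with the (non-multiplicative) equivariant HKR that the previous step relies on. Both points are handled by the formalism of Section $2$, so once the indexing is settled the ring-isomorphism claim follows from the equivariant Kontsevich theorem applied to $\NN$.
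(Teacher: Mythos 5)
Your proposal follows essentially the same route as the paper's own proof: factor through $HH^*_{\C^*}(\NN)$ via Theorem \ref{thmBL}, use the $\C^*$-equivariance of the twisted HKR isomorphism (Lemmas \ref{lemme2} and \ref{lemme3}) to restrict Kontsevich's multiplicative isomorphism to invariants, and then pass to the weight decomposition over $G/B$ via the affine projection. The only cosmetic difference is that the paper invokes Corollary \ref{cor3} and Lemma \ref{trivial} where you cite Corollary \ref{cor1}, and it makes explicit that the last identification is multiplicative because pushforward along the affine map $pr$ is monoidal.
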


\begin{proof}
Let us recall how to deduce the isomorphism \ref{thm0} from the theorem \ref{thmBL}. There are isomorphisms $$HH^{\bullet}(\u_0) \overset{\ref{thmBL}}{\cong} \bigoplus_{q+k=\bullet} Ext^q_{\mathrm{Coh}^{\C^*}(\NN \times \NN)}(\mathcal O_{\Delta}, \mathcal O_{\Delta} \otimes z^k) \cong \bigoplus_{i+j+k = \bullet} H^i_{\C^*} (\NN, \wedge^j T\NN \otimes z^k)$$  Here $H^i_{\C^*}(-) = R^i (Hom_{\C^*}(\O_X, -))$. The second isomorphism follows by taking the $\C^*$ invariant on both side of the usual HKR isomorphism (and using corollary \ref{cor3}). Hence we know that $$H^i_{\C^*} (\NN, \wedge^j T\NN \otimes z^k) = (H^i (\NN, \wedge^j T\NN \otimes z^k))^{\C^*} = (H^i (G/B, pr_*\wedge^j T\NN \otimes z^k))^{\C^*}$$ since $pr : \NN \to G/B$ is an affine $\C^*$-morphism. Since the $\C^*$-action is trivial on $G/B$ by proposition \ref{trivial} we obtain an isomorphism  $$(H^i (G/B, pr_*\wedge^j T\NN \otimes z^k))^{\C^*} \cong H^i(G/B, (pr_* \wedge^jT \NN \otimes z^k)[0]) \cong H^i(G/B, (pr_* \wedge^jT \NN)^{-k})$$ Putting everything together we obtain indeed the isomorphism stated in Theorem \ref{thm0} : $$HH^{\bullet}(u_0) \cong \bigoplus_{q+k=\bullet} Ext^q(\mathcal O_{\Delta}, \mathcal O_{\Delta} \otimes z^k) \cong \bigoplus_{i+j+k = \bullet} H^i(\NN, (\wedge^j T\NN)^k)$$  
Now the isomorphism $HH^{\bullet}(\u_0) \cong \bigoplus_{q+k=\bullet} Ext^q(\mathcal O_{\Delta}, \mathcal O_{\Delta} \otimes z^k) $ is multiplicative. Since the Todd class is $T$-invariant by lemma \ref{lemme3}, and that $I_{HKR}$ is $T$-equivariant by lemma \ref{lemme2} it follows that the usual multiplicative HKR theorem restricts to a multiplicative isomorphism $$\bigoplus_{q+k= \bullet } Ext^q(\mathcal O_{\Delta}, \mathcal O_{\Delta} \otimes z^k) \cong \bigoplus_{i+j+k = \bullet} H^i_{\C^*} (\NN, \wedge^j T\NN \otimes z^k)$$ Finally the isomorphism $$ \bigoplus_{i+j+k=\bullet} H^i_{\C^*} (\NN, \wedge^j T\NN \otimes z^k) \cong  \bigoplus_{i+j+k = \bullet} H^i(G/B, (pr_* \wedge^jT \NN)^k) $$ is multiplicative, because for an affine map $q : X \to Y$, the equivalence $\text{Coh}(X) \cong \text{Coh}(Y)-q_* \mathcal O_X$ is monoidal, hence respect the cup-product.  \end{proof}

\subsection{Multiplicative structure of $HC[\tau]$}

We will compute the multiplicative structure of a subalgebra of $z_0(\g)$, already noticed in \cite{LQ}, where the arguments was given at level of sheaf cohomology. 

\begin{lemma}
If $Y = T^*X$ is a cotangent bundle of a smooth algebraic variety $X$, with projection morphism $pr : Y \to X$, then the Todd class of $Y$ is given by the pullback to $Y$ of  \[ \prod_{\alpha,\beta \in C} \frac{-\alpha\beta}{(1 - e^{-\alpha})(1 - e^{-\beta})} \in H\Omega^{\bullet}(X) \] 
where $C$ is the set of Chern roots of $TY$. In particular, this Todd class can be written $1 + pr^*a$ where $a \in H^{\geq 2}(X)$.
\end{lemma}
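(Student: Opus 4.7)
The plan is to use the natural short exact sequence for the tangent bundle of a vector bundle, which forces all Chern data of $TY$ to be pulled back from $X$.

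First, I would write down the relative tangent sequence for the projection $pr : Y = T^*X \to X$. Since $pr$ is a vector bundle, the relative tangent bundle is canonically identified with $T_{Y/X} \cong pr^* T^*X$, so the sequence becomes
\[ 0 \to pr^* T^*X \to TY \to pr^* TX \to 0. \]
Multiplicativity of the Todd class on short exact sequences together with its compatibility with pullback then yields $Td(TY) = pr^* \bigl( Td(T^*X) \cdot Td(TX) \bigr)$, which already shows that $Td(TY)$ comes from $X$.

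Next, I would rewrite the product $Td(T^*X) \cdot Td(TX)$ in terms of Chern roots. By the splitting principle, if $\alpha_1, \ldots, \alpha_n$ are the Chern roots of $TX$, then the Chern roots of $T^*X$ are the $\beta_i = -\alpha_i$, and so the Chern roots of $TY$ come in canonical pairs $(\alpha_i, \beta_i)$ indexed by $C$. Applying the formula $Td(V) = \prod \frac{\gamma_i}{1 - e^{-\gamma_i}}$ to each factor and rearranging gives
\[ \prod_i \frac{\alpha_i}{1 - e^{-\alpha_i}} \cdot \frac{\beta_i}{1 - e^{-\beta_i}} = \prod_{(\alpha,\beta) \in C} \frac{-\alpha \beta}{(1 - e^{-\alpha})(1 - e^{-\beta})}, \]
which is the formula stated in the lemma. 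The compatibility between this Chern-root expression and the Atiyah-class definition of $Td$ used elsewhere in the paper is precisely the content of the identity $tr(\wedge^i At(X)) = c_i(X)$ recalled earlier in the excerpt.

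For the final assertion, I would expand $\frac{x}{1-e^{-x}} = 1 + \tfrac{x}{2} + \tfrac{x^2}{12} + O(x^4)$. For a single pair $(\alpha, -\alpha)$, the product $(1 + \tfrac{\alpha}{2} + \tfrac{\alpha^2}{12} + \cdots)(1 - \tfrac{\alpha}{2} + \tfrac{\alpha^2}{12} + \cdots)$ equals $1 - \tfrac{\alpha^2}{12} + O(\alpha^4)$, which has constant term $1$. Taking the product over all pairs shows the expression lies in $1 + H^{\geq 2}(X)$, giving $Td(TY) = 1 + pr^* a$ with $a \in H^{\geq 2}(X)$. The only conceptual step is the identification $T_{Y/X} \cong pr^* T^*X$; the remaining manipulations are formal and present no real obstacle.
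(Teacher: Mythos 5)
Your proof is correct and follows the same route as the paper: the relative tangent sequence $0 \to pr^*T^*X \to TY \to pr^*TX \to 0$ together with the identification $T_{Y/X} \cong pr^*T^*X$, then multiplicativity of the Todd class on short exact sequences and compatibility with pullback. The paper leaves the Chern-root computation and the degree count as "the result easily follows," whereas you carry them out explicitly; the substance is identical.
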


\begin{proof}
The map $pr : Y \to X$ induces a short exact sequence \[ 0 \to T_{Y/X} \to TY \to pr^*TX \to 0\] 
Moreover, $T_{Y/X} \cong pr^*(T^*X)$. Using the naturality of the Todd class and the additivity on short exact sequences the result easily follows. 
\end{proof}

For example if $X = G/B$ then $C$ is the set of positive roots of $\g$ and the Todd class of $G/B$ is given by $$ Todd(G/B) = \prod_{\alpha \in \Phi^+} \frac{\alpha}{1 - e^{-\alpha}} $$ 

Let $\tau$ be the the generator of $H^0(\NN, End(T\NN) )$ given by the identity map.

\begin{definition}
We define $HC[\tau]$ to be the subalgebra of $z_0(\g)$ generated by the Harish-Chandra center $HC_{\lambda}$ and $\tau$.
\end{definition}

Of course, for singular blocks $\u_{\lambda}(\g)$ there is a similar subalgebra inside $z_{\lambda}(\g)$, also written $HC[\tau]$ if there is no confusion.

\begin{proposition}\label{mainprop}
For any block, the multiplicative structure of $HC[\tau]$ is untwisted. 
\end{proposition}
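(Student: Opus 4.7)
Paragraph 1: The plan is to combine the multiplicativity of the BL isomorphism (Theorem \ref{thm2}) with the explicit identification of the generators of $HC[\tau]$ inside the target decomposition. The key observation is that both $HC_{\lambda}$ and $\tau$ sit in the cohomologically pure stratum $\bigoplus_{j,k} H^0(\NN, \wedge^j T\NN)^k$ (that is, bidegree $i=0$), and this stratum is closed under the wedge-cup product on $\bigoplus_{i,j,k} H^i(\NN, \wedge^j T\NN)^k$.

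Paragraph 2: First I would record the standard geometric identifications: under BL, $HC_\lambda$ maps into $H^0(\NN, \O_\NN)^0$ (as the $G$-invariant polynomial functions on $\NN$, via the Chevalley--Harish-Chandra isomorphism), while $\tau$---the generator of $H^0(\NN, \mathrm{End}(T\NN))$ given by the identity endomorphism---is identified via the symplectic isomorphism $T\NN \cong \Omega^1_\NN$ with the Poisson bivector in $H^0(\NN, \wedge^2 T\NN)^{-2}$. Both generators have cohomological degree $i = 0$, with tri-degrees $(0,0,0)$ and $(0,2,-2)$ respectively summing to zero, placing them correctly in $HH^0(\u_0)$.

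Paragraph 3: By Theorem \ref{thm2}, multiplication on $z_0(\g) = HH^0(\u_0)$ is transported by BL to the wedge-cup product on $\bigoplus H^i(\NN, \wedge^j T\NN)^k$. On two classes both lying in the $i = 0$ stratum, the cup product reduces to pointwise multiplication in $H^0$, so the product becomes simply the wedge of polyvector fields with function coefficients. Concretely, for $f,g \in HC_\lambda$ and $a,b \geq 0$,
\[ BL(f\,\tau^a)\cdot BL(g\,\tau^b) \;=\; (fg)\cdot \tau^{\wedge(a+b)} \;\in\; H^0(\NN,\wedge^{2(a+b)} T\NN)^{-2(a+b)}, \]
with no Todd-class term appearing. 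Pulling back through $BL^{-1}$, the product on $HC[\tau] \subset z_0(\g)$ is given by this formula, which is the ``untwisted'' structure asserted by the proposition.

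Paragraph 4: The step requiring care---the main obstacle---is the identification in Paragraph 2, particularly that $\tau \in z_0(\g)$ maps to an element of pure bidegree $(0,2,-2)$ with no $H^{\geq 1}$ contamination; this relies on the construction of the BL isomorphism in \cite{BL}, combined with the fact that the symplectic Poisson structure on $\NN$ is represented by a global bivector rather than a higher \v{C}ech cocycle. Granted that identification, the proposition is a formal consequence of Theorem \ref{thm2} together with the observation that any Todd-twist correction would move a class to strictly higher cohomological degree $i$, which cannot happen within the $i=0$ stratum. The same argument handles singular blocks $\u_\lambda(\g)$ on replacing $HC_0$ by $HC_\lambda$ and the full Springer resolution by its parabolic analogue.
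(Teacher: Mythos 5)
Your proposal does not reproduce the paper's argument, and it contains two genuine gaps. The first is the identification of the generators in Paragraph 2. In the Bezrukavnikov--Lachowska picture the Harish-Chandra part of $z_{\lambda}(\g)$ is the coinvariant algebra, $HC \cong H^{\bullet}(G/P) = \bigoplus_p H^p(G/P, \Omega^p_{G/P})$, embedded in $\bigoplus_p H^p(\NN, \wedge^p T\NN)^{-2p}$ via the vertical polyvectors $\wedge^p(pr^* T^*(G/P)) \subset \wedge^p T\NN$ --- it is \emph{not} the algebra of invariant functions sitting in $H^0(\NN,\O_{\NN})^0$. So $HC[\tau]$ is not contained in the $i=0$ stratum, and the degree-counting framework of Paragraphs 1--3 collapses at the outset. (The paper's own proof makes this explicit: it treats classes ``coming from $H^{\bullet}(X,\Omega^{\bullet})$'', written in local coordinates as polynomials in the fiber directions $\partial/\partial y_i$, and ends with ``in particular it holds for the Harish-Chandra center $HC \cong H^{\bullet}(G/P)$''.)

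The second gap is the logic of Paragraph 4. The twist $\langle Todd(\NN)^{-1/2}, -\rangle$ sends $H^0(\NN,\wedge^j T\NN)^k$ into $\bigoplus_{i \geq 0} H^i(\NN, \wedge^{j-i}T\NN)^k$; the correction terms with $i \geq 1$ simply land outside the $i=0$ stratum, and there is no degree obstruction forcing them to vanish. If they do not vanish, the untwisted map genuinely fails to be multiplicative, so ``cannot happen within the $i=0$ stratum'' is not an argument. What must actually be shown --- and what the paper shows --- is the vanishing of the contraction $(1 - Todd(\NN), \alpha)$ for the relevant classes $\alpha$, so that the twist acts as the identity on $BL(HC[\tau])$ and on products of its elements. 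This uses in an essential way that $\NN_P = T^*(G/P)$ is a cotangent bundle, so that by the preceding lemma $Todd(\NN_P) = 1 + pr^*a$ is pulled back from the base: in local coordinates $(x,y)$ the Todd class involves only the $dx_i$, while $\tau = \sum_i \partial_{x_i}\wedge\partial_{y_i}$ and the $HC$-classes involve only the $\partial_{y_i}$, so every contraction vanishes. Your proposal never invokes the cotangent-bundle structure of $\NN$ or the pullback property of its Todd class, which is the actual content of the proposition; without it the statement would have no reason to hold.
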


\begin{proof}
Let us take $U \subset G/P$ an affine open set isomorphic to an affine space. If $pr : \NN_P \to G/P$ is the projection, then $pr^{-1}(U)$ is an affine space with coordinates $(x,y)$. In these coordinates the Poisson bivector field is $\tau = \sum_i \frac{\partial}{\partial x_i} \wedge \frac{\partial}{\partial y_i}$ and the Todd class is $t = 1 + \sum a_{ij} dx_i \wedge dx_j + \text{higher order terms}$. It follows that $(1-t,\tau) = 0$ where $(\cdot, \cdot)$ is the pairing $T^*\NN_P \otimes T\NN_P \to \O_{\NN}$. In the exact sequence $$0 \to pr^*T^*(G/P) \to \NN_P \to pr^*T(G/P) \to 0$$ the left hand-side corresponds to the fiber of $pr$ since $\NN_P = T^*(G/P)$. In particular an element $ \alpha \in HT^{\bullet}(\NN)$ coming from $H^{\bullet}(X, \Omega^{\bullet})$ can we written in local coordinates as a polynomial in $\frac{\partial}{\partial y_i}$. For such a class it is again clear that $(1-t, \alpha) = 0$. In particular it holds for the Harish-Chandra center $HC \cong H^{\bullet}(G/P)$.
\end{proof}

\begin{corollary}
If $G/P$ is a projective space $\P^n$, then $HH^{0}(\u_{\lambda}(\g)) = z_{\lambda}(\g)$ is the polynomial algebra $\C[x,\tau]/(x^a\tau^ b: a+b > n)$.
\end{corollary}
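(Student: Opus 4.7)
The plan is to combine Theorem \ref{thm2} and Proposition \ref{mainprop} with an explicit sheaf-cohomology computation on $\P^n$. By Theorem \ref{thm2}, $HH^0(\u_\lambda(\g))$ is identified as a ring with $\bigoplus_{i+j+k=0} H^i(\P^n, (pr_*\wedge^j T\NN_P)^k)$ via the twisted HKR, and by Proposition \ref{mainprop} this ring structure restricts to the untwisted cup product on $HC[\tau]$. The remaining tasks are thus: (i) compute the product structure of $HC[\tau]$ under the cup product, and (ii) show the equality $HH^0(\u_\lambda(\g)) = HC[\tau]$ when $G/P = \P^n$.

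For (i), I would use the local picture from the proof of Proposition \ref{mainprop}: $x \in HC$ corresponds to the Kähler class in $H^1(\P^n, \Omega^1_{\P^n})$, while $\tau = \sum_i \partial_{x_i}\wedge\partial_{y_i}$ is the identity endomorphism in $H^0(\P^n, \text{End}(T\P^n))$. Using the splitting $T\NN_P = pr^*T\P^n \oplus pr^*T^*\P^n$ given by a $\C^*$-equivariant choice of connection, the cup product places $x^a\tau^b$ in $H^a(\P^n, \wedge^b T\P^n \otimes \Omega^{a+b}_{\P^n})$. When $a + b > n$ the factor $\Omega^{a+b}_{\P^n}$ vanishes, so $x^a\tau^b = 0$, giving all stated relations. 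For $a + b \leq n$, the class is nonzero since it is the image of the generator of $H^a(\P^n, \Omega^a_{\P^n}) \cong \C$ under the map induced by $\text{Id} \in H^0(\P^n, \text{End}(\wedge^b T\P^n))$.

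For (ii), I would decompose $(pr_*\wedge^j T\NN_P)^k$ via the tangent exact sequence, splitting by $\C^*$-weights into pieces $\Omega^a_{\P^n} \otimes \wedge^b T\P^n \otimes \text{Sym}^c T\P^n$ with $a + b = j$ and $k = 2(c - a)$; the condition $i+j+k=0$ becomes $a = i + b + 2c$. Bott's theorem on $\P^n$ should then force the $c \geq 1$ pieces to vanish in this range, while the $c = 0$ pieces contribute exactly one one-dimensional space for each admissible triple $(a, b, i) = (a' + b', b', a')$ with $a' + b' \leq n$, matching $x^{a'}\tau^{b'}$. Summing yields $\dim HH^0(\u_\lambda(\g)) = \binom{n+2}{2} = \dim HC[\tau]$. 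The hard part is this Bott-theorem vanishing for $c \geq 1$: the bundles $\Omega^a \otimes \wedge^b T\P^n \otimes \text{Sym}^c T\P^n$ must have vanishing cohomology in degree $i = a - b - 2c$ whenever $c \geq 1$, which is a case analysis via the Euler sequence on $\P^n$ or equivalently Borel-Weil-Bott on $\SL_{n+1}/P_1$.
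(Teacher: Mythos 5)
Your overall strategy diverges from the paper's at the decisive step. The paper's proof of this corollary is essentially a citation: Proposition \ref{mainprop} shows the product on $HC[\tau]$ is the untwisted cup product, and the identification $HC[\tau] = z_{\lambda}(\g)$ for $G/P_{\lambda}$ a projective space is quoted from \cite{LQ2} rather than proved. Your part (i) — placing $x^a\tau^b$ in $H^a(\P^n, \wedge^b T\P^n \otimes \Omega^{a+b}_{\P^n})$, killing it when $a+b>n$ because $\Omega^{a+b}_{\P^n}=0$, and detecting it by contraction when $a+b\le n$ — is a reasonable fleshing-out of what the paper leaves implicit, and is consistent with the local-coordinate picture in the proof of Proposition \ref{mainprop}. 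Your part (ii) is a genuinely different, self-contained route that would make the corollary independent of \cite{LQ2}; that is a legitimate thing to want, but as written it is not a proof.

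The gap is exactly where you flag it: the Borel--Weil--Bott vanishing of $H^{a-b-2c}(\P^n, \Omega^a_{\P^n}\otimes\wedge^b T\P^n\otimes \mathrm{Sym}^c T\P^n)$ for $c\ge 1$, together with the one-dimensionality (not merely nonvanishing) of the $c=0$ pieces $H^{a-b}(\P^n,\Omega^a_{\P^n}\otimes\wedge^b T\P^n)$, is the entire content of the equality $HH^0(\u_{\lambda}(\g)) = HC[\tau]$, and you do not carry it out. Since $\Omega^a\otimes\wedge^b T\otimes\mathrm{Sym}^c T$ is not irreducible, this requires decomposing into irreducible homogeneous bundles on $\SL_{n+1}/P_1$ before Bott applies, a nontrivial plethysm even on $\P^n$; you should either execute that case analysis or fall back on citing \cite{LQ2} as the paper does. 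Two smaller points to repair: the claimed global $\C^*$-equivariant splitting $T\NN_P \cong pr^*T\P^n\oplus pr^*T^*\P^n$ is not automatic (the tangent sequence of $pr$ need not split); you only need the induced filtration and its associated graded, which suffices for the vanishing and, with a weight argument, for the dimension count. And your detection map in (i) is stated in the wrong direction — the correct statement is that the contraction $\Omega^{a+b}_{\P^n}\otimes\wedge^b T\P^n\to\Omega^a_{\P^n}$ sends $x^a\tau^b$ to a nonzero multiple of $x^a$, which is how one sees the class survives.
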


\begin{proof}
It follows immediately from the previous proposition and the fact (proved in \cite{LQ2}) that $HC[\tau] = z_{\lambda}(\g)$ if $G/P_{\lambda}$ is a projective space. 
\end{proof}

\end{document}